\newtheorem{theorem}{Theorem}
\newtheorem{lemma}[theorem]{Lemma}
\newtheorem{proposition}[theorem]{Proposition}
\newtheorem{corollary}[theorem]{Corollary}
\newtheorem{problem}{Problem}
\DeclareMathOperator{\conv}{conv}
\def\RR{\mathbb{R}}
\newcommand{\myarrow}[1][0]{%
  \mathrel{%
    \text{$
     \begin{tikzpicture}[baseline = -0.5ex]
       \node[inner sep=0pt,outer sep=0pt,rotate = #1] (a) at (0,0)  {$\xrightarrow{}$};
    \end{tikzpicture}
    $}%
  }%
}%
\def\inst#1{$^{#1}$}
\date{}
\title{Tight bounds on the expected number of holes \\in random point sets}
\begin{document}

\author{Martin Balko\inst{1}\thanks{M. Balko was supported by the grant no.~21-32817S of the Czech Science Foundation (GA\v{C}R), by the Center for Foundations of Modern Computer Science (Charles University project UNCE/SCI/004), and by the PRIMUS/17/SCI/3 project of Charles University.
This article is part of a project that has received funding from the European Research Council (ERC) under the European Union's Horizon 2020 research and innovation programme (grant agreement No 810115).
} 
\and
Manfred Scheucher\inst{2}\thanks{M.\ Scheucher was supported by the DFG Grants SCHE~2214/1-1 and FE~340/12-1, and by the internal research funding ``Post-Doc-Funding'' from Technische Universit\"at Berlin. 
We also acknowledge support by the internal research program IFFP 2016--2020 of the FernUniversi\"at in Hagen.
}
\and
Pavel Valtr\inst{1}\thanks{P. Valtr was supported by the grant no.~21-32817S of the Czech Science Foundation (GA\v{C}R) and by the PRIMUS/17/SCI/3 project of Charles University.}
}

\maketitle

\begin{center}
{\footnotesize
\inst{1} 
Department of Applied Mathematics, \\
Faculty of Mathematics and Physics, Charles University, Czech Republic \\
\texttt{balko@kam.mff.cuni.cz}
\\\ \\
\inst{2} 
Institut f\"ur Mathematik, Technische Universit\"at Berlin, Germany\\
\texttt{scheucher@math.tu-berlin.de}
}
\end{center}

\begin{abstract}
For integers $d \geq 2$ and $k \geq d+1$, a \emph{$k$-hole} in a set $S$ of points in general position in $\mathbb{R}^d$ is a $k$-tuple of points from $S$ in convex position such that the interior of their convex hull does not contain any point from $S$.
For a convex body $K \subseteq \mathbb{R}^d$ of unit $d$-dimensional volume, we study the expected number $EH^K_{d,k}(n)$ of $k$-holes in a set of $n$ points drawn uniformly and independently at random from $K$. 

We prove an asymptotically tight lower bound on $EH^K_{d,k}(n)$ by showing that, for all fixed integers $d \geq 2$ and  $k\geq d+1$, the number $EH_{d,k}^K(n)$ is at least $\Omega(n^d)$.
For some small holes, we even determine the leading constant $\lim_{n \to \infty}n^{-d}EH^K_{d,k}(n)$ exactly.
We improve the currently best known lower bound on $\lim_{n \to \infty}n^{-d}EH^K_{d,d+1}(n)$ by Reitzner and Temesvari~(2019). 
In the plane, we show that the constant $\lim_{n \to \infty}n^{-2}EH^K_{2,k}(n)$ is independent of $K$ for every fixed $k \geq 3$ and we compute it exactly for $k=4$, improving earlier estimates by Fabila-Monroy, Huemer, and Mitsche~(2015) and by the authors~(2020).
\end{abstract}

\section{Introduction}

For a positive integer $d$, let $S$ be a set of points from $\mathbb{R}^d$ in \emph{general position}.
That is, for every $k \in \{1,\dots,d-1\}$, no $k+2$ points from $S$ lie on a $k$-dimensional affine subspace of~$\mathbb{R}^d$.
Throughout the whole paper we only consider point sets in~$\mathbb{R}^d$ that are finite and in general position.

A point set $P$ is in \emph{convex position} if no point from $P$ is contained in the convex hull of the remaining points from $P$.
For an integer $k \geq d+1$, a \emph{$k$-hole} $H$ in $S$ is a set of $k$ points from $S$ in convex position such that the convex hull ${\rm conv}(H)$ of $H$ does not contain any point of $S$ in its interior.

The study of $k$-holes in point sets was initiated by Erd\H{o}s~\cite{Erdos1978} who asked whether, for each $k \in \mathbb{N}$, every sufficiently large point set in the plane contains a $k$-hole.
This was known to be true for $k \leq 5$, but, in the 1980s, Horton~\cite{Horton1983} constructed arbitrarily large point sets without $7$-holes.
The question about the existence of $6$-holes was a longstanding open problem until 2007, when Gerken~\cite{Gerken2008} and Nicolas~\cite{Nicolas2007} showed that every sufficiently large set of points in the plane contains a $6$-hole.

The existence of $k$-holes was considered also in higher dimensions.
Valtr~\cite{VALTR1992b} showed that, for $k \le 2d+1$, every sufficiently large set of points in $\mathbb{R}^d$ contains a $k$-hole.
He also constructed arbitrarily large sets of points in $\mathbb{R}^d$ that do not contain any $k$-hole with $k > 2^{d-1} (P(d-1)+1)$,  where $P(d-1)$ denotes the product of the first $d-1$ prime numbers.
Very recently Bukh, Chao, and Holzman~\cite{bch20} improved this construction and Colon and Li~\cite{ConlonLim2021} construted new sets in $\mathbb{R}^d$ with no large holes.

Estimating the number of $k$-holes in point sets in $\mathbb{R}^d$ attracted a lot of attention; see~\cite{5holes_journal_version}.
In particular, it is well-known that the minimum number of $(d+1)$-holes  (also called \emph{empty simplices}) in sets of $n$ points in $\mathbb{R}^d$ is of order $O(n^d)$.
This is tight, as every set of $n$ points in $\mathbb{R}^d$ contains at least $\binom{n-1}{d}$ $(d+1)$-holes~\cite{BaranyFueredi1987,katMe88}.

The tight upper bound $O(n^d)$ can be obtained by considering random point sets drawn from a convex body.
More formally, a \emph{convex body} in $\mathbb{R}^d$ is a compact convex subset of $\mathbb{R}^d$ with a nonempty interior.
We use $\lambda_d$ to denote the $d$-dimensional Lebesgue measure on $\mathbb{R}^d$ and $\mathcal{K}_d$ to denote the set of all convex bodies in $\mathbb{R}^d$ of volume $\lambda_d(K)=1$.
For an integer $k \geq d+1$ and a convex body $K \in \mathcal{K}_d$, let $EH^K_{d,k}(n)$ be the expected number of $k$-holes in a set $S$ of $n$ points chosen uniformly and independently at random from $K$. 
Note that $S$ is in general position with probability $1$.

B\'{a}r\'{a}ny and F\"{u}redi~\cite{BaranyFueredi1987} proved the upper bound $EH_{d,d+1}^K(n) \leq (2d)^{2d^2} \cdot \binom{n}{d}$ for every $K \in \mathcal{K}_d$.
Valtr~\cite{Valtr1995} improved this bound in the plane by showing $EH^K_{2,3}(n) \le 4\binom{n}{2}$ for any $K \in \mathcal{K}_2$.
Very recently, Reitzner and Temesvari~\cite[Theorem~1.4]{ReitznerTemesvari2019} 
showed that this bound on $EH_{2,3}^K(n)$ is asymptotically tight for every  $K \in \mathcal{K}_2$.
This follows from their more general bounds
\begin{equation}
\label{eq-reitTemPlane}
\lim_{n \to \infty} n^{-2}EH_{2,3}^K(n) = 2
\end{equation}
and
\begin{equation}
\label{eq-reitTem}
\frac{2}{d!} \leq \lim_{n \to \infty} n^{-d}EH_{d,d+1}^K(n) \leq \frac{d}{(d+1)}\frac{\kappa^{d+1}_{d-1}\kappa_{d^2}}{\kappa^{d-1}_d\kappa_{(d-1)(d+1)}}
\end{equation}
for $d \geq 2$, where $\kappa_d = \pi^{\frac{d}{2}}\Gamma(\frac{d}{2}+1)^{-1}$ is the volume of the $d$-dimensional Euclidean unit ball.
Moreover, the upper bound in~\eqref{eq-reitTem} holds with equality in the case $d = 2$, and if $K$ is a $d$-dimensional ellipsoid with $d \geq 3$.
Note that, by \eqref{eq-reitTem}, 
there are absolute positive constants $c_1,c_2$ such that 
\[
d^{-c_1d} \le \lim_{n \to \infty}n^{-d}EH_{d,d+1}^K(n) \le d^{-c_2d}
\]
for every $d \geq 2$ and $K \in \mathcal{K}_d$.

Considering general $k$-holes in random point sets in $\mathbb{R}^d$, 
the authors~\cite{bsv2021} recently proved that $EH_{d,k}^K(n) \leq O(n^d)$ for all fixed integers $d \geq 2$ and $k \geq  d+1$ and every $K \in \mathcal{K}_d$.
More  precisely, we showed 
\begin{equation}
\label{eq-holesUpperBound}
EH_{d,k}^K(n) \leq 2^{d-1}\cdot \left(2d^{2d-1}\binom{k}{\lfloor d/2 \rfloor}\right)^{k-d-1} \cdot \frac{n(n-1) \cdots (n-k+2)}{(k-d-1)! \cdot (n-k+1)^{k-d-1}}.
\end{equation}

In this paper, we also study the expected number $EH_{d,k}^K(n)$ of $k$-holes in random sets of $n$ points in $K$.
In particular, we derive a lower bound that asymptotically matches the upper bound~\eqref{eq-holesUpperBound} for all fixed values of $k$.
Moreover, for some small holes, we even determine the leading constants $\lim_{n \to \infty}n^{-d}EH_{d,k}^K(n)$.

\section{Our Results}
\label{sec:results}

First, we show that for all fixed integers $d \geq 2$ and  $k\geq d+1$ the number $EH_{d,k}^K(n)$ is in~$\Omega(n^d)$, which matches the upper bound~\eqref{eq-holesUpperBound} by the authors~\cite{bsv2021} up to the leading constant.

\begin{theorem} 
\label{thm:largerHoles}
For all integers $d \geq 2$ and $k \geq d+1$, there are constants $C=C(d,k)>0$ and $n_0=n_0(d,k)$ such that, for every integer  $n \ge n_0$ and every convex body $K \subseteq \mathbb{R}^d$ of unit volume, we have $EH^K_{d,k}(n) \geq C \cdot n^d$.
\end{theorem}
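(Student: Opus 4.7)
By linearity of expectation,
\[
EH^K_{d,k}(n) \;=\; \binom{n}{k}\,\Pr\bigl[k\text{ uniform i.i.d.\ points in }K\text{ form a }k\text{-hole}\bigr],
\]
so it suffices to prove that the probability on the right-hand side is $\Omega(n^{d-k})$. My strategy is to exhibit a single geometric family of \emph{skinny} $k$-holes --- whose convex hulls have $d$-dimensional volume of order $1/n$ --- and show that a random $k$-tuple belongs to this family with the required probability.

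\medskip

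Since $K$ is a convex body of unit volume, it contains a closed ball $B$ of positive radius in its interior. I would fix a hyperplane $F$ through the center of $B$ and a $(d-1)$-simplex $\sigma\subset B\cap F$ with vertices $v_1,\dots,v_d$ of positive $(d-1)$-volume. I would then fix $k-d$ template points $u_1,\dots,u_{k-d}$ lying above $\sigma$ at constant heights in the direction normal to $F$, arranged so that $\{v_1,\dots,v_d,u_1,\dots,u_{k-d}\}$ is in \emph{strict} convex position --- for instance by placing the $u_j$ on a strictly concave graph over $\sigma$ with lateral positions inside $\sigma$. Setting $\varepsilon:=c/n$ for a small constant $c>0$, I define the \emph{compressed template} by keeping the $v_i$ fixed and rescaling each $u_j$ in the normal direction by the factor $\varepsilon$, yielding points $\tilde u_j$; the resulting $k$ points are still in strict convex position and their convex hull has $d$-volume $\Theta(\varepsilon)=\Theta(1/n)$.

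\medskip

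Next I would partition an ordered random $k$-tuple $(p_1,\dots,p_k)$ into a \emph{base} $p_1,\dots,p_d$ and a \emph{top} $p_{d+1},\dots,p_k$, and I would design cells $B_1,\dots,B_d\subset B$ (small constant-radius balls around $v_1,\dots,v_d$) and $R_1,\dots,R_{k-d}\subset B$ (anisotropic boxes around the $\tilde u_j$ with constant lateral extent and normal thickness of order $\varepsilon$), all pairwise disjoint. The key property is that, because the uncompressed template has a strictly positive clearance in convex position, the cells can be chosen small enough in the relevant directions (a constant fraction of the lateral clearance, and a constant fraction of the $\Theta(\varepsilon)$ clearance that remains after compression) so that \emph{any} choice of one point per cell produces a $k$-tuple in convex position whose convex hull lies in the slab of thickness $\varepsilon$ around $F$. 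Let $\mathcal{E}$ denote the event $\{p_i\in B_i\text{ for }i\le d,\ p_{d+j}\in R_j\text{ for }j\le k-d\}$. Since $\lambda_d(B_i)=\Theta(1)$ and $\lambda_d(R_j)=\Theta(\varepsilon)=\Theta(1/n)$, we get $\Pr[\mathcal{E}]=\Theta(n^{d-k})$. Given $\mathcal{E}$, the convex hull has $d$-volume $O(1/n)$, so the probability that none of the remaining $n-k$ random points of $S$ lies in it equals $(1-O(1/n))^{n-k}=\Theta(1)$; combining these bounds, $\Pr[k\text{-hole}]\ge\Omega(n^{d-k})$, hence $EH^K_{d,k}(n)\ge\binom{n}{k}\cdot\Omega(n^{d-k})=\Omega(n^d)$.

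\medskip

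The most delicate step I expect is making the convex-position condition precise under the anisotropic perturbation, with constants that do not degenerate. I would isolate the clearance $\eta>0$ of the uncompressed template, observe that after compression by $\varepsilon$ in the normal direction the normal clearance becomes $\Theta(\varepsilon)$ while the lateral clearance is unchanged, and then bound the allowable cell sizes accordingly; this yields the uniform positive constants needed in every probability estimate above. The anisotropy of the top cells is essential here: replacing the boxes $R_j$ by isotropic balls of diameter $\Theta(\varepsilon)$ would contribute only $\Theta(\varepsilon^d)$ in volume each and the bound would collapse by a factor of $n^{(d-1)(k-d)}$.
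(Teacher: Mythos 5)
There is a genuine gap, and it sits exactly at the step you flagged as delicate: your event $\mathcal{E}$ is internally inconsistent. You take the base cells $B_i$ to be balls of \emph{constant} radius (this is where $\lambda_d(B_i)=\Theta(1)$ and hence $\Pr[\mathcal{E}]=\Theta(n^{d-k})$ comes from), yet you also claim that any choice of one point per cell yields a convex-position $k$-tuple whose hull lies in the slab of thickness $\varepsilon=c/n$ around $F$. These two requirements are incompatible: a point $p_i\in B_i$ typically sits at a \emph{constant} normal distance from $F$, so the hull has normal width $\Theta(1)$ and $d$-volume $\Theta(1)$, not $O(1/n)$; the emptiness probability is then $(1-\Theta(1))^{n-k}=e^{-\Theta(n)}$, not $\Theta(1)$. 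Convex position also genuinely fails on a constant fraction of $\mathcal{E}$: already for $d=2$, put $p_1$ at height $+\delta$ and $p_2$ at height $-\delta$ for a constant $\delta>0$; measured against the line $\overline{p_1p_2}$, the top points' heights are $\varepsilon h_j$ \emph{minus a linear term of constant size}, and a short computation shows one top point falls inside the hull of the others (the slope condition $s_2\le s_3$ along the chain is violated by a constant margin). If you instead repair thinness by shrinking the normal extent of each $B_i$ to $O(\varepsilon)$, then $\lambda_d(B_i)=\Theta(1/n)$, so $\Pr[\mathcal{E}]=\Theta(n^{-k})$ and $\binom{n}{k}\Pr[\mathcal{E}]=\Theta(1)$ — you lose exactly the factor $n^d$ you need. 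That factor is the $d$-parameter family of hyperplanes near $F$: the configurations contributing $\Theta(n^{d-k})$ per tuple are those within $O(1/n)$ of \emph{some} hyperplane, and a single fixed deterministic template (fixed $F$, fixed compressed heights, fixed cells) can never capture this freedom.

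The paper's proof repairs precisely this by making the thin body \emph{adaptive} rather than fixed. Only the $d$ base points are anchored, each in a constant-size neighborhood $N_i$ (probability $\Theta(1)$ per $d$-tuple, contributing $\binom{n}{d}=\Theta(n^d)$); then, conditioned on their actual positions, one passes a circumscribed $(d-1)$-ellipsoid through the resulting $\varepsilon$-regular simplex (Lemma~\ref{lem-almostRegular}) and fattens it to a $d$-dimensional ellipsoid $G\subseteq K$ of volume exactly $1/(n-d)$ containing the base points on its boundary. The remaining $k-d$ vertices of the hole are not prescribed individual sample points paying $\Theta(1/n)$ each; instead one only requires that \emph{some} of the other $n-d$ points land in each of $k-d$ disjoint caps of $G$ of volume $\Theta(1/n)$ (each such existence event has probability $\Theta(1)$, and picking in each cap the point closest to the cap base keeps the hull empty), while $G$ minus the caps receives no points — all with probability bounded below by a constant (Lemmas~\ref{lem-capsGreedy} and~\ref{lem-probCaps}). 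Dividing by the at most $\binom{k}{d}$ base $d$-tuples per hole gives $\Omega(n^d)$. So your global accounting ($\binom{n}{k}\cdot\Omega(n^{d-k})$, and the $(1-O(1/n))^{n-k}=\Theta(1)$ emptiness factor for a genuinely thin hull) is sound, but the per-tuple probability $\Omega(n^{d-k})$ cannot be realized by a fixed template; conditioning on the base points and building the slab through them is the missing idea.
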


In particular, we see that random point sets typically contain many $k$-holes no matter how large $k$ is, as long as it is fixed.
This contrasts with the fact that, for every $d \geq 2$, there is a number $t=t(d)$ and arbitrarily large sets of points in $\mathbb{R}^d$ without any $t$-holes~\cite{Horton1983,VALTR1992b}.

Theorem~\ref{thm:largerHoles} together with~\eqref{eq-holesUpperBound} shows that $EH^K_{d,k}(n)=\Theta(n^d)$ for all fixed integers $d$ and $k$ and every $K \in \mathcal{K}_d$, which determines the asymptotic growth rate of $EH^K_{d,k}(n)$.
We thus focus on determining the leading constants $\lim_{n \to \infty}n^{-d}EH^K_{d,k}(n)$ for small holes.

For a convex body $K \subseteq \mathbb{R}^d$ (of a not necessarily unit volume), we use $p^K_d$ to denote the probability that 
the convex hull of $d+2$ points chosen uniformly and independently at random from $K$ 
is a $d$-simplex.
That is, the probability that one of the $d+2$ points falls in the convex hull of the remaining $d+1$ points.
The problem of computing $p^K_d$ is known as the $d$-dimensional \emph{Sylvester’s convex hull problem} for $K$ and it has been studied extensively.
Let $p_d = \max_K p^K_d$, where the maximum is taken over all convex bodies $K \subseteq \mathbb{R}^d$. 
We note that the maximum is achieved, since it is well-known that every affine-invariant
continuous functional on the space of convex bodies attains a maximum.

First, we prove the following lower bound on the expected number $EH_{d,d+1}^K(n)$ of empty simplices in random sets of $n$ points in $K$, which improves the lower bound from~\eqref{eq-reitTem} by Reitzner and Temesvari~\cite{ReitznerTemesvari2019} by a factor of $d/p_{d-1}$.

\begin{theorem}
\label{thm:improvedLowerBound}
For every integer $d \geq 2$ and every convex body $K \subseteq \mathbb{R}^d$ of unit volume, we have
\[
\lim_{n \to \infty} n^{-d}EH_{d,d+1}^K(n) \ge \frac{2}{(d-1)!p_{d-1}}.
\]
\end{theorem}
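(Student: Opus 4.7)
The plan is to pin down the asymptotic leading constant of $EH^K_{d,d+1}(n)$ via a classical Laplace-type analysis and then squeeze out the extra factor $d/p_{d-1}$ from an application of Sylvester's problem to hyperplane sections of $K$. Starting from the elementary identity
\[
EH^K_{d,d+1}(n)\;=\;\binom{n}{d+1}\,\mathbb{E}\!\left[(1-V)^{\,n-d-1}\right],
\]
where $V$ is the $d$-volume of the simplex spanned by $d+1$ independent uniform points of $K$ (so, using $\lambda_d(K)=1$, an independent point lies in it with probability exactly $V$), a standard Laplace argument gives
\[
\lim_{n\to\infty}\,n^{-d}\,EH^K_{d,d+1}(n)\;=\;\frac{f_V(0)}{(d+1)!},
\]
with $f_V$ the density of $V$. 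It therefore suffices to establish the inequality $f_V(0)\ge 2d(d+1)/p_{d-1}$.

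To compute $f_V(0)$ I condition on $X_1,\dots,X_d$, which almost surely span an affine hyperplane $H$; write $A:=\vol_{d-1}(\conv(X_1,\dots,X_d))$ for the $(d-1)$-area of that base, and represent $X_{d+1}=y+h\mathbf{n}$ in terms of its projection $y$ onto $H$ and its signed height $h$, so that $V=|h|A/d$. A direct coarea computation then gives
\[
f_{V\mid X_1,\dots,X_d}(0)\;=\;\frac{2d\,\vol_{d-1}(K\cap H)}{A}.
\]
I then apply the affine Blaschke--Petkantschin formula, which replaces Lebesgue integration on $(\RR^d)^d$ by an integration over the spanned hyperplane and the positions on it: $dx_1\cdots dx_d=(d-1)!\,A\,d\sigma_H^d\,dH$, where $\sigma_H$ is Lebesgue measure on $H$ and $dH$ is the motion-invariant measure on affine hyperplanes. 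The factors of $A$ cancel, yielding the clean identity
\[
f_V(0)\;=\;2\,d!\int \vol_{d-1}(K\cap H)^{\,d+1}\,dH,
\]
with the integral running over affine hyperplanes meeting $K$.

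Now I apply the definition of $p_{d-1}^{K\cap H}$ to the $(d-1)$-dimensional body $K\cap H$: for $Y_1,\dots,Y_d$ independent uniform points of $K\cap H$,
\[
p_{d-1}^{K\cap H}\;=\;\frac{(d+1)\,\mathbb{E}[\vol_{d-1}(\conv(Y_1,\dots,Y_d))]}{\vol_{d-1}(K\cap H)},
\]
which together with $p_{d-1}^{K\cap H}\le p_{d-1}$ rearranges to $\vol_{d-1}(K\cap H)^{d+1}\ge \tfrac{d+1}{p_{d-1}}\int_{(K\cap H)^d}A\,d\sigma_H^d$. Substituting and then applying Blaschke--Petkantschin in reverse,
\[
\int\!\!\int_{(K\cap H)^d}\!\! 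A\,d\sigma_H^d\,dH\;=\;\frac{1}{(d-1)!}\int_{K^d}dx_1\cdots dx_d\;=\;\frac{1}{(d-1)!}
\]
(using $\lambda_d(K)=1$), one obtains $f_V(0)\ge 2\,d!(d+1)/((d-1)!\,p_{d-1})=2d(d+1)/p_{d-1}$, and dividing by $(d+1)!$ yields the theorem.

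The main technical obstacle is the Laplace step, namely the identification $\lim_n n^{-d}EH^K_{d,d+1}(n)=f_V(0)/(d+1)!$: one has to verify that $f_V(t)$ admits a well-defined limit as $t\downarrow 0$ and that dominated convergence applies to the associated integral $n\,\mathbb{E}[(1-V)^{n-d-1}]$. Care is also needed in applying Blaschke--Petkantschin near the degenerate locus $\{A=0\}$, on which the intermediate integrand $1/A$ blows up, although it is a null set in $K^d$. By contrast, the Sylvester step itself is essentially a one-line application of the defining inequality $p_{d-1}^{K\cap H}\le p_{d-1}$, and the cancellation of the Blaschke--Petkantschin constant across its two applications guarantees that the resulting bound is independent of the normalization chosen for $dH$.
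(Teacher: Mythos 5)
Your proposal is correct, and at its core it is the same proof as the paper's: the decisive step in both arguments is to feed the Sylvester identity $p^{K\cap H}_{d-1} = (d+1)\,EV^{K\cap H}_{d-1}/\lambda_{d-1}(K\cap H)$ together with the extremality bound $p^{K\cap H}_{d-1}\le p_{d-1}$ through the affine Blaschke--Petkantschin formula; your ``reverse'' application of Blaschke--Petkantschin, which turns $\int\int_{(K\cap H)^d}\Delta_{d-1}$ back into $\lambda_d(K)^d=1$, is literally the paper's use of Lemma~\ref{lem:integral} in \eqref{eq-improvedLowerBound1}--\eqref{eq-improvedLowerBound2} read in the opposite direction. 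Where you genuinely differ is the front end: the paper cites Reitzner and Temesvari's Theorem~1.4 as a black box for the exact limit $\lim_{n\to\infty} n^{-d}EH^K_{d,d+1}(n) = \frac{d\kappa_d}{d+1}\int_{A(d,d-1)}\lambda_{d-1}(K\cap E)^{d+1}\,\mu_{d-1}({\rm d}E)$, whereas you re-derive that formula from scratch via the identity $EH^K_{d,d+1}(n)=\binom{n}{d+1}\mathbb{E}[(1-V)^{n-d-1}]$, a Watson/Laplace limit $n\,\mathbb{E}[(1-V)^{n-d-1}]\to f_V(0)$, and the coarea computation $f_{V\mid X_1,\dots,X_d}(0)=2d\,\lambda_{d-1}(K\cap H)/\Delta_{d-1}$. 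Your bookkeeping is consistent: with the paper's normalization one has ${\rm d}H=(\omega_d/\omega_1)\,\mu_{d-1}({\rm d}E)$, under which your $f_V(0)=2\,d!\int\lambda_{d-1}(K\cap H)^{d+1}\,{\rm d}H$ matches Reitzner--Temesvari's constant exactly, and you are right that the normalization of ${\rm d}H$ cancels between your two applications, so the final bound $\frac{2d(d+1)}{p_{d-1}(d+1)!}=\frac{2}{(d-1)!\,p_{d-1}}$ is normalization-free. What your route buys is self-containedness; what it costs is exactly the step you flag but do not execute: to close the Laplace step you must prove $F_V(t)/t\to f_V(0)$ as $t\downarrow 0$, which requires continuity of the parallel-section function $s\mapsto\lambda_{d-1}(K\cap H_s)$ (Brunn--Minkowski gives this for interior sections), a dominating function such as $2td\,M_u/\Delta_{d-1}$ with $M_u$ the maximal section of $K$ orthogonal to the normal $u$ of $H$, and finiteness of $\mathbb{E}\left[\lambda_{d-1}(K\cap H)/\Delta_{d-1}\right]$, which itself follows from Blaschke--Petkantschin since $\int\lambda_{d-1}(K\cap H)^{d+1}\,{\rm d}H<\infty$ for a bounded $K$. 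All of these verifications are standard and succeed, so this is a sketch-level omission rather than a flaw; alternatively, citing Reitzner--Temesvari at that point recovers the paper's proof essentially verbatim.
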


Using the trivial fact $p_1 = 1$ with the inequality $EH_{2,3}^K(n) \leq 2(1+o(1))n^2$ proved by Valtr~\cite{Valtr1995}, we see that the leading constant in our estimate is asymptotically tight in the planar case.
Results of Blaschke~\cite{Blaschke1917,Blaschke1923} (see Theorem~\ref{thm:Blaschke}) and Groemer~\cite{Groemer1973} (see Theorem~\ref{thm:Groemer}) imply the following bounds showing that, in any dimension $d \ge 3$, the leading constant in $EH_{d,d+1}^K(n)$ depends on the convex body.

\begin{corollary}
\label{cor:improvedLowerBound}
For every $d \geq 3$, 
every $d$-dimensional simplex $S$ of unit volume
fulfills
\[\lim_{n \to \infty} n^{-d}EH^S_{d,d+1}(n) < \lim_{n \to \infty} n^{-d}EH^B_{d,d+1}(n),\]
where $B$ is a $d$-dimensional ball of unit volume.
Moreover, 
for every convex body $K \subseteq \mathbb{R}^3$ of unit volume,
we have
\[
3 \le \lim_{n \to \infty} n^{-3}EH_{3,4}^K(n) \le \frac{12 \pi^2}{35} \approx 3.38386. % 3.38386436608778
\]
\end{corollary}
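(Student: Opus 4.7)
The plan is to split the corollary into its two parts. For the three-dimensional bounds, the lower bound is a direct application of Theorem~\ref{thm:improvedLowerBound} with $d=3$, which gives $\lim_{n \to \infty} n^{-3}EH^K_{3,4}(n) \ge 2/(2!\,p_2) = 1/p_2$. The planar Sylvester constant equals $p_2 = 1/3$, attained uniquely (up to affinities) by the triangle via Blaschke's classical solution of the four-point problem; this is what Theorem~\ref{thm:Blaschke} encodes. Substituting yields the lower bound of $3$. For the upper bound, specialise (\ref{eq-reitTem}) to $d=3$ and simplify: using $\kappa_2 = \pi$, $\kappa_3 = 4\pi/3$, $\kappa_8 = \pi^4/24$, and $\kappa_9 = 32\pi^4/945$, the right-hand side $\tfrac{3}{4}\cdot \kappa_2^{4}\kappa_9/(\kappa_3^{2}\kappa_8)$ collapses to $12\pi^2/35$.

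For the strict separation $\lim_{n \to \infty} n^{-d}EH^S_{d,d+1}(n) < \lim_{n \to \infty} n^{-d}EH^B_{d,d+1}(n)$ in dimension $d \ge 3$, I would exploit the equality case of (\ref{eq-reitTem}). Because $B$ is an ellipsoid, the Reitzner--Temesvari upper bound is attained at $B$, so $\lim_{n \to \infty} n^{-d}EH^B_{d,d+1}(n) = \tfrac{d}{d+1}\cdot \kappa_{d-1}^{d+1}\kappa_{d^2}/(\kappa_d^{d-1}\kappa_{(d-1)(d+1)})$, an explicit constant. It therefore suffices to show that the simplex does \emph{not} attain this value. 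The asymptotic analysis in~\cite{ReitznerTemesvari2019} represents $\lim_{n \to \infty} n^{-d}EH^K_{d,d+1}(n)$ as a universal constant times an affine-invariant functional of $K$ (an affine surface area type quantity). Blaschke's affine isoperimetric inequality (Theorem~\ref{thm:Blaschke}) says this functional is uniquely maximised over $\mathcal{K}_d$ by ellipsoids, while Groemer's companion result (Theorem~\ref{thm:Groemer}) says it is uniquely minimised by simplices. Since simplices and ellipsoids are affinely inequivalent when $d \ge 3$, both extremals are strict at $S$, and the required strict inequality follows.

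The main obstacle is the translation step: one has to track which object appears on the right-hand side of the Reitzner--Temesvari asymptotic for $\lim_{n \to \infty} n^{-d}EH^K_{d,d+1}(n)$ and identify it as precisely the affine-invariant functional extremised by Blaschke and Groemer. Once that identification is made, the uniqueness clauses of both inequalities deliver the strict separation for free, with no need to evaluate $\lim_{n \to \infty} n^{-d}EH^S_{d,d+1}(n)$ at the simplex explicitly; everything else in the corollary is a direct substitution into Theorem~\ref{thm:improvedLowerBound} or (\ref{eq-reitTem}).
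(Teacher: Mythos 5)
Your treatment of the three-dimensional bounds is correct and essentially identical to the paper's: the lower bound $3$ is Theorem~\ref{thm:improvedLowerBound} at $d=3$ combined with $p_2=\tfrac13$ from Theorem~\ref{thm:Blaschke}, and your evaluation of the upper bound in \eqref{eq-reitTem} at $d=3$ (with $\kappa_2=\pi$, $\kappa_3=4\pi/3$, $\kappa_8=\pi^4/24$, $\kappa_9=32\pi^4/945$) correctly yields $12\pi^2/35$; the paper simply cites Reitzner and Temesvari for that bound.

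The strict separation is where your argument has a genuine gap. Your plan needs the statement that ellipsoids are the \emph{unique} maximizers of the affine-invariant functional $\int_{A(d,d-1)}\lambda_{d-1}(K\cap E)^{d+1}\,\mu_{d-1}({\rm d}E)$ from the Reitzner--Temesvari limit formula, but neither result you invoke says this. Theorem~\ref{thm:Blaschke} is a purely planar statement about $p^K_2$ (Sylvester's problem in $\mathbb{R}^2$), not a $d$-dimensional affine isoperimetric inequality, so it cannot certify the equality case of the upper bound in \eqref{eq-reitTem}. Worse, your appeal to Theorem~\ref{thm:Groemer} is misstated: Groemer's theorem says the \emph{ball} uniquely minimizes $EV^K_d$ (equivalently $p^K_d$); it says nothing about simplices. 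The claim that the functional is ``uniquely minimised by simplices'' is essentially the simplex conjecture, which the paper explicitly flags as a major open problem for $d\ge 3$ --- and for the functional $\lim_n n^{-d}EH^K_{d,d+1}(n)$, simplex minimality is exactly the kind of statement one would have to prove, not quote. Although that clause is not actually needed for your argument, the step you do need --- that the simplex fails to attain the ellipsoid maximum --- is left resting on a misattributed citation (a correct uniqueness clause does exist in the literature, due to Busemann--Straus and Grinberg, but it is outside the paper's toolkit).

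The paper closes this gap using only Groemer's theorem, applied \emph{section-wise} rather than to $K$ itself. By \eqref{eq-improvedLowerBoundEstimate} and Theorem~\ref{thm:Groemer} in dimension $d-1$, the affine-invariant quantity $p^{K'}_{d-1}$ is minimized exactly when $K'$ is an ellipsoid. Every hyperplane section of the ball $B$ is an ellipsoid, while no hyperplane section of the simplex $S$ is one (all sections are polytopes), and by compactness the minimum of $p^{K'}_{d-1}$ over sections $K'$ of $S$ is attained, hence strictly larger than $p^{B^{d-1}}_{d-1}$. Feeding these pointwise bounds on $EV^{K\cap E}_{d-1}$ into the exact Blaschke--Petkantschin identity \eqref{eq-improvedLowerBound1}--\eqref{eq-improvedLowerBound2}, whose left-hand side is the fixed constant $1$ for both bodies, forces $\int_{A(d,d-1)}\lambda_{d-1}(S\cap E)^{d+1}\,\mu_{d-1}({\rm d}E) < \int_{A(d,d-1)}\lambda_{d-1}(B\cap E)^{d+1}\,\mu_{d-1}({\rm d}E)$, and the Reitzner--Temesvari formula converts this into the strict inequality between the two limits. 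This mechanism also explains the hypothesis $d\ge 3$, which your write-up misplaces: simplices and ellipsoids are affinely inequivalent already in the plane, but there the sections are segments, $p_1=1$ for every body, and the separation genuinely disappears, consistently with \eqref{eq-reitTemPlane}.
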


%In contrast to the planar case, the leading constant in $EH_{d,d+1}^K(n)$ depends on the body~$K$ for any dimension $d\ge 3$.
We  performed computer experiments 
to obtain estimates 
on the constants $c_{3,4}^K=\lim_{n \to \infty} n^{-3}EH_{3,4}^K(n)$ 
when $K$ is a Platonic solid or a ball.
The idea was to repeatedly sample 3 points $p_1,p_2,p_3$ uniformly at random from~$K$ and to compute the ratio of the so-called \emph{Fischer triangle} spanned by $p_1,p_2,p_3$ which is contained inside~$K$.
This ratio coincides with $c_{3,4}^K$; see Section~3 in \cite{bsv2021}.
Since this ratio is a realization of an independent and identically distributed random variable,
the sample average is normally distributed by the central limit theorem.
After evaluating $16$ million samples by our program,
we now have a statistical evidence 
that 
\begin{itemize}
\item 
$3.095 \leq c_{3,4}^T \leq 3.097$ for a regular tetrahedron~$T$,
\item 
$3.264 \leq c_{3,4}^C \leq 3.266$ for a cube~$C$,
\item 
$3.289 \leq c_{3,4}^O \leq 3.291$ for a regular octahedron~$O$,
\item 
$3.354 \leq c_{3,4}^D \leq 3.356$ for a regular dodecahedron~$D$,
and
\item 
$3.362 \leq c_{3,4}^I \leq 3.364$ for a regular icosahedron~$I$.
\end{itemize}
Moreover, our program gave a statistical evidence that $3.383 \leq c_{3,4}^B \leq 3.385$ for a ball~$B$,
which for dimension $2$ confirms the constant by Reitzner and Temesvari \cite{ReitznerTemesvari2019};
see the upper bound in \eqref{eq-reitTem}.
All the bounds are with confidence level $99.99 \%$.
The source code of our program and more information is available on the supplemental website~\cite{supplemental_program}.

By Theorem~\ref{thm:improvedLowerBound}, better upper bounds on $p_{d-1}$ imply stronger lower bounds on $EH^K_{d,d+1}(n)$.
The problem of estimating $p_d$ is equivalent to the problem of estimating the expected $d$-dimensional volume $EV_d^K$ of the convex hull of $d+1$ points drawn from a convex body $K \subseteq \mathbb{R}^d$ uniformly and independently at random, since $p^{K}_d = \frac{(d+2)EV_d^K}{\lambda_d(K)}$; see~\cite{klee69,schneider88}.
In the plane, Blaschke~\cite{Blaschke1917,Blaschke1923} showed that $EV^K_2$ is maximized if $K$ is a triangle (see Theorem~\ref{thm:Blaschke}), which we use in Section~\ref{sec:proof_thm_improvedLowerBound} to derive the lower bound in Corollary~\ref{cor:improvedLowerBound}.
For $d \geq 3$, it is one of the major problems in convex geometry to decide whether $EV^K_d$ is maximized if $K$ is a simplex~\cite{SchneiderWeil2008}.
Meckes~\cite{meckes04} proved that there is a constant $c>0$ such that every convex body $K\in \mathcal{K}_d$ with $d \geq 2$ satisfies
\[EV^K_d \leq (cd^{-1/4}\log{d})^d,\]
which by Theorem~\ref{thm:improvedLowerBound} implies the following improved lower bound on $EH^K_{d,d+1}(n)$.

\begin{corollary}
\label{cor-newLowerBound}
There is a constant $c>0$ such that for every integer $d \geq 2$ and every convex body $K \subseteq \mathbb{R}^d$ of unit volume, we have
\[\lim_{n \to \infty}n^{-2}EH^K_{d,d+1}(n) \geq \frac{2}{(d+2)(d-1)!}\left(\frac{d^{1/4}}{c\log{d}}\right)^d.\]
\end{corollary}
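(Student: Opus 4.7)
The plan is to derive the corollary by composing two ingredients that the excerpt has already set up: Theorem~\ref{thm:improvedLowerBound}, which bounds the leading constant of $EH^K_{d,d+1}(n)$ from below by $\tfrac{2}{(d-1)!\,p_{d-1}}$, and Meckes's theorem, which bounds the expected simplex volume from above by $(cd^{-1/4}\log d)^d$. The whole argument is therefore essentially one of substitution; there is no genuinely new geometric content, so my job is to carry out the bookkeeping so that the final expression has the shape advertised in the statement.

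First, I would convert the Sylvester probability $p_{d-1}$ into an expected-volume quantity via the identity $p^K_d = \tfrac{(d+2)\,EV^K_d}{\lambda_d(K)}$ attributed in the excerpt to Klee and Schneider. Since this identity is scale-invariant, I can restrict attention to unit-volume bodies $K\subseteq\RR^{d-1}$ and obtain $p^K_{d-1}=(d+1)\,EV^K_{d-1}$, and hence
\[
p_{d-1}\;=\;(d+1)\max_{K\in\mathcal{K}_{d-1}} EV^K_{d-1}.
\]

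Second, for $d\ge 3$ I would feed Meckes's bound in dimension $d-1$ into this equation, giving
\[
p_{d-1}\;\le\;(d+1)\bigl(c(d-1)^{-1/4}\log(d-1)\bigr)^{d-1}.
\]
Plugging this into Theorem~\ref{thm:improvedLowerBound} yields a lower bound whose dominant factor is $(d-1)^{(d-1)/4}/(c\log(d-1))^{d-1}$. To put it into the precise form $\tfrac{2}{(d+2)(d-1)!}\bigl(\tfrac{d^{1/4}}{c\log d}\bigr)^d$, I need to absorb the discrepancies $(d-1)^{(d-1)/4}$ vs.\ $d^{d/4}$, $(\log(d-1))^{d-1}$ vs.\ $(\log d)^d$, and $(d+1)$ vs.\ $(d+2)$. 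Each of these amounts to a bounded multiplicative loss (for $d\ge 3$), which can be hidden by enlarging the absolute constant~$c$; I would formalise this with an elementary monotonicity estimate comparing $\log(d-1)/\log d$ and $(d-1)/d$ on $d\ge 3$.

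The base case $d=2$ has to be checked separately: since $p_1=1$, Theorem~\ref{thm:improvedLowerBound} already gives the value $2$, and the corollary's right-hand side is a bounded quantity that I just need to make smaller than $2$ by choosing the absolute constant $c$ large enough (this is consistent with the absorption step above, since the same $c$ must work for all~$d$). There is no real obstacle here; the only step requiring care is the constant-absorption bookkeeping between dimension $d-1$ (where Meckes applies) and the exponent $d$ that appears in the final statement, but this is routine once the inequalities above are written out.
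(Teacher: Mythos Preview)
Your approach is exactly the one the paper intends: the corollary is stated immediately after Meckes's bound with no proof beyond the phrase ``which by Theorem~\ref{thm:improvedLowerBound} implies\ldots'', and the substitution you describe is the only thing to do.

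One point of phrasing should be tightened. The discrepancies you list are \emph{not} bounded multiplicative losses: passing from $(d-1)^{(d-1)/4}$ to $d^{d/4}$ costs a factor of order $d^{1/4}$, and passing from $(\log(d-1))^{d-1}$ to $(\log d)^d$ costs a factor of order $\log d$. What makes the absorption work is not boundedness but the fact that replacing the constant $c_0$ from Meckes by any fixed multiple $c=Mc_0$ with $M>1$ yields a gain of $M^d$ on the right-hand side, and this exponential gain dominates the polynomial losses $d^{1/4}\log d$. Concretely, after bounding $\log(d-1)\le\log d$ and $(d-1)^{-1/4}\le 2^{1/4}d^{-1/4}$, the required inequality
\[
(d+1)\Bigl(\tfrac{2^{1/4}c_0\log d}{d^{1/4}}\Bigr)^{d-1}\;\le\;(d+2)\Bigl(\tfrac{c\log d}{d^{1/4}}\Bigr)^{d}
\]
reduces to $M^d\gtrsim d^{1/4}/\log d$, which holds for all $d\ge 3$ once $M$ exceeds, say, $2$. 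Your separate treatment of $d=2$ via $p_1=1$ is correct and necessary, since Meckes's bound only applies in dimension $\ge 2$.
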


We also note that there are connections between the probabilities $p_d$, 
the famous \emph{Slicing conjecture} 
(also known as \emph{Hyperplane conjecture}), 
and the \emph{isotropic constant}; 
see for example~\cite{meckes04}.

For convex bodies $K$ with small diameter, we can obtain the following better estimate on $p^K_d$.

\begin{proposition}
\label{prop:improvedLowerBound2}
Let $\varepsilon > 0$ be a real number and let $d \geq 1$ be an integer.
Let $K \subseteq \mathbb{R}^d$ be a convex body.
If there is a volume-preserving affine transformation $f\colon \mathbb{R}^d \to \mathbb{R}^d$ such that $f(K)$ has diameter $D$, then $p_d^K \leq\frac{ (d+2)D^d}{d!\lambda_d(K)}$.
In particular, if $D \leq d^{1-\varepsilon}\lambda_d(K)^{1/d}$, then $p^K_d \leq \frac{(d+2)d^{(1-\varepsilon)d}}{d!}$.
\end{proposition}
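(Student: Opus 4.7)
The plan is to reduce the statement, via affine invariance, to the case where $K$ itself has diameter $D$, and then use the known identity relating $p_d^K$ to the expected simplex volume $EV_d^K$.

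First I would observe that $p_d^K$ is invariant under volume-preserving affine transformations: if $f\colon\mathbb{R}^d\to\mathbb{R}^d$ is such a map, then $f$ pushes the uniform distribution on $K$ to the uniform distribution on $f(K)$, and the property ``the convex hull of $d+2$ points is a $d$-simplex'' is preserved by affine bijections. Hence $p_d^K=p_d^{f(K)}$, and we may replace $K$ by $f(K)$ and assume that $K$ itself has diameter $D$ (the volume $\lambda_d(K)$ is unchanged by $f$). Next I would invoke the formula $p_d^K=(d+2)EV_d^K/\lambda_d(K)$ quoted in the paper, so that it suffices to prove the bound $EV_d^K\le D^d/d!$ under the assumption $\mathrm{diam}(K)=D$.

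For this bound, let $X_0,\ldots,X_d$ be i.i.d.\ uniform in $K$. Their convex hull is almost surely a simplex of volume
\[
\frac{1}{d!}\bigl|\det(X_1-X_0,\,X_2-X_0,\,\ldots,\,X_d-X_0)\bigr|.
\]
Since all $X_i$ lie in a set of diameter $D$, each vector $X_i-X_0$ has Euclidean norm at most $D$. Hadamard's inequality then gives $|\det(X_1-X_0,\ldots,X_d-X_0)|\le D^d$ pointwise, so the simplex volume is at most $D^d/d!$ pointwise and therefore $EV_d^K\le D^d/d!$ in expectation. Plugging this into the Sylvester identity yields
\[
p_d^K \;\le\; \frac{(d+2)D^d}{d!\,\lambda_d(K)},
\]
which is the first claim.

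For the ``in particular'' part I would just substitute: under the hypothesis $D\le d^{1-\varepsilon}\lambda_d(K)^{1/d}$ one has $D^d\le d^{(1-\varepsilon)d}\lambda_d(K)$, and the $\lambda_d(K)$ factors cancel to give $p_d^K\le (d+2)d^{(1-\varepsilon)d}/d!$. I do not expect any real obstacle here; the only thing one must be careful about is the affine-invariance reduction at the very beginning (using that $f$ is volume-preserving so that both $\lambda_d(K)$ and the bound on the right-hand side are unaffected), and the use of Hadamard's inequality, which is what produces the clean constant $1/d!$ instead of the sharper but more cumbersome regular-simplex constant.
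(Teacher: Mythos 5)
Your proposal is correct and follows essentially the same route as the paper: reduce to $f(K)=K$ by affine invariance, apply the Sylvester identity $p_d^K=(d+2)EV_d^K/\lambda_d(K)$, and bound the simplex volume pointwise by $D^d/d!$. The only cosmetic difference is that you invoke Hadamard's inequality on $\det(X_1-X_0,\dots,X_d-X_0)$, while the paper writes the volume as $\frac{1}{d!}\prod_{i=2}^{d+1}h_i$ with each height $h_i\le D$ --- these are the same estimate in different clothing, since each height is at most the norm of the corresponding edge vector.
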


We note that there are convex bodies that do not satisfy the assumption from Proposition~\ref{prop:improvedLowerBound2}, for example the regular $d$-dimensional simplex.

Besides empty simplices, we also consider larger $k$-holes.
The expected number $EH_{2,4}^K(n)$ of $4$-holes in random planar sets of $n$ points was considered by Fabila-Monroy, Huemer, and Mitsche~\cite{mhm15}, who showed
\[
EH_{2,4}^K(n) \leq 18\pi D^2 n^2 + o(n^2)
\]
for any $K \in \mathcal{K}_2$, where $D=D(K)$ is the diameter of $K$.
Since we have $D \geq 2/\sqrt{\pi}$, by the Isodiametric inequality~\cite{evansGariepy15}, the leading constant in their bound is at least $72$ for any $K \in \mathcal{K}_2$. 
This result was strengthened by the authors~\cite{bsv2021} to $EH_{2,4}^K(n) \leq 12n^2 + o(n^2)$ for every $K \in \mathcal{K}_2$.
Here, we determine the leading constant in $EH_{2,4}^K(n)$ exactly.

\begin{theorem} 
\label{thm:4holes}
For every convex body $K \subseteq \mathbb{R}^2$ of unit area, we have
\[\lim_{n \to \infty} n^{-2}EH_{2,4}^K(n) = 10-\frac{2\pi^2}{3} \approx 3.42026. %3.42026373260710
\]
\end{theorem}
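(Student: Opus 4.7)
The starting point is the standard integral representation
\[
EH_{2,4}^K(n) = \binom{n}{4} \int_{K^4} \chi(p_1,\dots,p_4)\,(1 - A(p_1,\dots,p_4))^{n-4}\,dp_1\cdots dp_4,
\]
where $\chi$ is the indicator that the four points are in convex position and $A$ is the area of their convex hull. Because the leading constant $\lim_{n\to\infty} n^{-2} EH_{2,k}^K(n)$ is independent of $K$ for every fixed $k \geq 3$ (as is already established in the paper), it suffices to compute the limit via a universal affine-invariant local calculation, which I would carry out in coordinates centered at an arbitrary interior point of~$K$.

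The main contribution to the $n^{-2}$ leading term comes from 4-tuples whose convex hull has area $\Theta(1/n)$. Because four points are nearly collinear only on a codimension-$2$ sub-manifold, the density of $A$ at zero (conditional on convex position) vanishes linearly, so that $\int_{K^4} \chi\,(1-A)^{n-4}\,d\mu = c/n^2 + o(1/n^2)$ for a universal constant $c$. Boundary effects of $K$ contribute only lower-order terms, confirming $K$-independence of the constant.

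To identify $c$, I would parameterize a near-collinear 4-tuple by a chord of $K$ containing the two extreme vertices $p_1,p_2$ in an auxiliary direction, the horizontal coordinates $x_3,x_4 \in (0,\ell)$ of the remaining two points along this chord (with $\ell=|p_1-p_2|$), and their perpendicular heights $h_3,h_4 \in \mathbb{R}$. The indicator $\chi$ then splits into cases according to the signs of $h_3,h_4$ (same or opposite sides of the chord) together with explicit slope inequalities, and in each case $A$ is a piecewise bilinear function of $(x_3,x_4,h_3,h_4)$. After rescaling $h_i = \tilde h_i / n$, the factor $(1-A)^{n-4}$ becomes $\exp(-\tilde A)$ in the limit, and the integral over $\tilde h_3,\tilde h_4$ reduces to an elementary exponential integral. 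What remains is a two-variable integral in the normalized shape parameters that can be collapsed to a one-variable integral whose evaluation yields $10-\tfrac{2\pi^2}{3}$; the $\pi^2$ summand arises naturally through a dilogarithm-type identity such as $\int_0^1 -\log(1-t)/t\,dt = \pi^2/6$.

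The main obstacle is the explicit evaluation in the last step: correctly handling the case analysis in $\chi$, tracking the piecewise form of $A$ across regions, and reducing the resulting multi-dimensional integral to the closed form involving $\pi^2$. A secondary, more technical, issue is rigorously justifying the passage from the discrete setup to the universal local integral, controlling the error from configurations close to the boundary of $K$.
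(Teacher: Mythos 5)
Your plan is essentially the paper's own proof: it, too, reduces the count to thin quadrilaterals anchored at two points at macroscopic distance $\ell$, splits according to whether the two middle points lie on the same or on opposite sides of the chord (the paper's ``edge'' versus ``diagonal'' types), rescales the heights by $n$ so that $(1-A)^{n-4}\to e^{-\tilde{A}}$ via monotone convergence, and evaluates the resulting exponential integrals, with the $\pi^2$ entering through the exponential-integral series and $\sum_{k\geq 1}(-1)^{k+1}/k^2=\pi^2/12$, the alternating counterpart of your dilogarithm identity $\int_0^1 -\log(1-t)/t\,\mathrm{d}t=\pi^2/6$. The only substantive deviations are that the paper needs no appeal to Theorem~\ref{thm:independence} (whose proof is by this very method, so invoking it saves no work), and that it anchors each hole canonically at its \emph{diameter} pair rather than at directional extremes along an auxiliary chord: this choice both guarantees that each $4$-hole is counted exactly once at finite $n$ and, via the unit-area constraint (the triangle $p_1p_2p$ has area at most $1$, forcing $|y(p)|\leq 2/\ell$), confines the two inner points to a bounded region $R$ on which the limit interchange is legitimate --- exactly the boundary/uniformity issue you flag as your secondary obstacle, which in the paper dissolves because after rescaling only the data $l(0)=0$, $r(0)=\ell$, $|I_0|=\ell$ of the chord survive, making the per-pair limit constant independent of $K$ and even of $\ell$.
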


Our computer experiments~\cite{supplemental_program} support this result and also equation~\eqref{eq-reitTemPlane}. We sampled random sets of $n=25 000$ points from a triangle, a square, and a disk, and for each shape the average number of 3-holes and 4-holes was around 
$2 n^2$ and $3.42 n^2$, respectively.
Moreover, 
the average number of 5-holes and 6-holes was around 
$3.4 n^2$ and $3 n^2$, respectively.
It may turn out that
the expected numbers of 4-holes and 5-holes coincide asymptotically.
Note that all these bounds for random point sets are worse than the ones obtained by the squared Horton sets of size $n$,
 which asymptotically contain 
$c_k n^2$ $k$-holes for $k \in\{3,\dots,6\}$, where
$c_3 \approx 1.6195$, 
$c_4 \approx 1.9396$,
$c_5 \approx 1.0206$, and
$c_6 \approx 0.2005$~\cite{BaranyValtr2004}.

For larger $k$-holes in the plane, we do not determine the value $\lim_{n \to \infty}n^{-2}EH_{2,k}^K(n)$ exactly, but we can show that it exists and does not depend on the convex body $K$.
We recall that this is not true in larger dimensions already for empty simplices by Corollary~\ref{cor:improvedLowerBound}.

\begin{theorem}
\label{thm:independence}
For every integer $k \geq 3$, there is a constant $C=C(k)$ such that, for every convex body $K \subseteq \mathbb{R}^2$ of unit area, we have
\[\lim_{n \to \infty}n^{-2}EH_{2,k}^K(n) = C.\]
\end{theorem}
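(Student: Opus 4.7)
The plan is to reparametrize the integral representation of $EH_{2,k}^K(n)$ in such a way that the $K$-dependence cancels in the limit, exposing a universal constant $C(k)$. Start from
\[
EH_{2,k}^K(n) = \binom{n}{k}\int_{K^k}\mathbf{1}[\vec p\text{ in convex position}]\,(1-A(\vec p))^{n-k}\,d\vec p,
\]
where $A(\vec p) := \lambda_2(\conv(\vec p))$. Only the range $A(\vec p) = O(1/n)$ contributes to order $n^2$, and on that range $(1-A)^{n-k} = e^{-nA}(1+O(nA^2))$, so together with the upper bound \eqref{eq-holesUpperBound} one may replace $(1-A)^{n-k}$ by $e^{-nA(\vec p)}$ at an $o(n^2)$ cost. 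It therefore suffices to show that $\int_{K^k}\mathbf{1}[\text{conv.\ pos.}]\,e^{-nA(\vec p)}\,d\vec p = c_k'/n^{k-2}+o(n^{-(k-2)})$ for a universal $c_k'$, since this yields $\lim_{n\to\infty}n^{-2}EH_{2,k}^K(n) = c_k'/k!$.

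The crucial step is a diameter-based reparametrization. For almost every configuration a unique pair of points realizes the diameter of $\{p_1,\dots,p_k\}$; restrict to the event that $p_1,p_2$ is this pair (paying a combinatorial factor of $\binom{k}{2}$) and choose coordinates so that $p_1 = (0,0)$ and $p_2 = (L,0)$ with $L := |p_1-p_2|$. Write each remaining point as $p_j = (L\xi_j,h_j)$ with $\xi_j\in[0,1]$. Since $e^{-nA(\vec p)}$ is non-negligible only when each $|h_j|=O(1/(nL))$, rescale $h_j = u_j/(nL)$. A direct shoelace computation then shows that $nA(\vec p) = \Phi_k(\xi_3,\dots,\xi_k,u_3,\dots,u_k)$ for a universal continuous function $\Phi_k$ independent of $L$ and $K$, while the Jacobian yields $dp_j = d\xi_j\,du_j/n$. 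The powers of $L$ cancel, so the inner integral equals $c_k/n^{k-2}$, where
\[
c_k := \int\mathbf{1}[\text{conv.\ pos.}]\,e^{-\Phi_k}\prod_{j=3}^{k}d\xi_j\,du_j
\]
is a finite universal constant. Integrating over $(p_1,p_2)\in K^2$ contributes only $\lambda_2(K)^2 = 1$, giving $C(k) = c_k/(2(k-2)!)$ and proving both existence and $K$-independence of the limit. As a sanity check, $k=3$ yields $\Phi_3(u_3)=|u_3|/2$ and $c_3 = 4$, so $C(3) = 2$ in agreement with \eqref{eq-reitTemPlane}.

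The main obstacle is controlling the boundary effects: when the diameter segment $p_1p_2$ lies close to $\partial K$, the thin strip of width $O(1/(nL))$ around it may spill outside $K$, so the inner integral is slightly smaller than $c_k/n^{k-2}$. I would handle this by separating off diameter pairs whose segment comes within distance $\varepsilon$ of $\partial K$ (whose contribution to $EH_{2,k}^K(n)$ is $O(\varepsilon\cdot n^2)$) and letting $\varepsilon\to 0$ slowly with $n$. Two further technical checks are needed: the finiteness of $c_k$, which follows because each $|u_j|$ contributes at least linearly to $\Phi_k$ under the convex position constraint, yielding Laplace-type tail decay; and the verification that "round" configurations with $L = O(1/\sqrt n)$ contribute only $O(n) = o(n^2)$, via a direct estimate that the expected number of $k$-tuples of diameter at most $1/\sqrt n$ among $n$ random points in $K$ is of order $\binom{n}{k}\cdot n^{-(k-1)} = O(n)$.
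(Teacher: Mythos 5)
Your proposal is correct and rests on the same core strategy as the paper---normalize the a.s.\ unique diameter pair to a horizontal segment and rescale the transverse coordinates by a factor of order $n$, then pass to the limit by a convergence theorem to obtain a $K$-independent integral---but your execution differs in ways worth comparing. The paper rescales only $Y_i=ny_i$, keeps $\ell=|p_1p_k|$ alive in the limiting expressions, and decomposes $k$-holes into $2k-4+2^{k-4}(k^2-5k+6)$ combinatorial types $(T,a,b)$ so that the convexity constraints become explicit nested integration bounds; your anisotropic substitution $p_j=(L\xi_j,\,u_j/(nL))$ instead cancels every power of $L$ simultaneously in the Jacobian ($dp_j=d\xi_j\,du_j/n$) and in the area (by the shoelace sum, $nA=\frac12\bigl|\sum(\xi_iu_{i+1}-\xi_{i+1}u_i)\bigr|=\Phi_k(\xi,u)$, and convex position is preserved because the scaling is linear), so a single universal integral with a convex-position indicator replaces the whole type decomposition. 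This buys two genuine advantages: the independence of $\lim_n n^{k-2}\Pr[E(\cdot\mid p_1,p_k)]$ from $\ell$, which the paper leaves implicit at the final step where it integrates over the pair $\{p_1,p_k\}$, comes for free; and your bound $\Phi_k\ge\max_j|u_j|/2$ (the triangle with base from $(0,0)$ to $(1,0)$ and apex $(\xi_j,u_j)$ lies in the hull) gives a self-contained proof that $c_k<\infty$, where the paper cites \cite{bsv2021}. Your additional corrections---replacing $(1-A)^{n-k}$ by $e^{-nA}$ (the relevant cutoff is $A=O(\log n/n)$ rather than $O(1/n)$, an inessential slip), discarding configurations of diameter $O(n^{-1/2})$ at cost $O(n)$, and the $\varepsilon$-strip near $\partial K$---are exactly the right ones; for the last, note you could avoid the $\varepsilon$-argument entirely: for a.e.\ pair $(p_1,p_2)$ both points lie in the interior of $K$, hence so does the segment, the rescaled inner integral is bounded by $c_k$ uniformly and converges to $c_k$ pointwise, so dominated convergence over $K^2$ (of measure $1$) suffices. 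The bookkeeping $C(k)=c_k/(2(k-2)!)$ via the symmetry factor $\binom{k}{2}$ is right, and the sanity check $C(3)=2$ against \eqref{eq-reitTemPlane} confirms it.
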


The idea of the proof of Theorem~\ref{thm:largerHoles} is novel and the proof is presented in Section~\ref{sec-largerHoles}.
The proof of Theorem~\ref{thm:improvedLowerBound} is a combination of ideas used by Reitzner and Temesvari~\cite{ReitznerTemesvari2019} and of a result by Blaschke~\cite{Blaschke1917,Blaschke1923}.
The proof can be found in Section~\ref{sec:proof_thm_improvedLowerBound}, where we also prove Proposition~\ref{prop:improvedLowerBound2} using simple geometric arguments. 
Finally, Theorems~\ref{thm:4holes} and~\ref{thm:independence} are proved in Sections~\ref{sec:4holes} and~\ref{sec:independece}, respectively.
Both these proofs combine arguments used by Reitzner and Temesvari~\cite{ReitznerTemesvari2019} and by Valtr~\cite{Valtr1995}.

\paragraph{Open problems} 
We determined the leading constants $\lim_{n \to \infty }n^{-d}EH^K_{d,k}(n)$ exactly for small holes and, in particular, we showed that these limits exist in such cases.
However, we do not have any argument that would yield the existence of these limits for all values of $d$ and $k$.
It is thus an interesting open problem to determine whether $\lim_{n\to \infty }n^{-d}EH^K_{d,k}(n)$ exists for all positive integers $d$ and $k$ with $k \geq d+1$.
It follows from a result by Reitzner and Temesvari~\cite[Theorem~1.4]{ReitznerTemesvari2019} and from Theorem~\ref{thm:independence} that this limit exists if $k=d+1$ or if $k\geq 3$ and $d=2$, respectively.

We also pose the following problem asking for the value of the expected number of empty simplices in a tetrahedron.

\begin{problem}
\label{conj-simplex}
Let $K$ be a $3$-dimensional simplex of unit volume. Determine the leading constant $\lim_{n \to \infty} n^{-d} EH_{3,4}^K(n)$.
\end{problem}

It might also be interesting to determine $\lim_{n \to \infty}n^{-2}EH_{2,k}^K(n)$ exactly for as many values $k>4$ as possible.
Recall that, by Theorem~\ref{thm:independence}, the number $\lim_{n \to \infty}n^{-2}EH_{2,k}^K(n)$ is the same for all convex bodies $K \in\mathcal{K}_2$.

\section{Proof of Theorem~\ref{thm:largerHoles}}
\label{sec-largerHoles}

We show that, for all integers $d \geq 2$ and $k \geq d+1$, there are constants $C=C(d,k)>0$ and $n_0=n_0(d,k)$ such that, for every $n \geq n_0$ and every convex body $K \in \mathcal{K}_d$, we have $EH^K_{d,k}(n) \geq C \cdot n^d$.
In our proof, we make no serious attempt to optimize the constants.
We start by stating some auxiliary results and definitions.

We use $B^d$ to denote the $d$-dimensional ball of radius $1$ centered at the origin and $S^{d-1}$ to denote the sphere that is the boundary of $B^d$.
For $h \in [0,1]$, let $C(h) = \{(x_1,\dots,x_d) \in B^d \colon x_1 \geq 1-h\}$ be the \emph{spherical cap} of $S^{d-1}$ of height $h$.
For $x \in S^{d-1}$, we use $C(h,x)$ to denote the image of $C(h)$ under a rotation $r$ of $S^{d-1}$ around the origin such that $r(1,0,\dots,0)=x$.
We call the set $C(h,x)$ the \emph{spherical cap centered at~$x$}.
Note that $C(h,x)$ and $C(h)$ have the same height, volume, and surface area for every $x \in S^{d-1}$.

We first give a high-level overview of the proof.
For a given convex body $K \in \mathcal{K}_d$, we choose a sufficiently large $(d-1)$-dimensional ellipsoid $S''$ contained in $K$.
We then select $d$ points $q_1,\dots,q_d$ on $S''$ and we consider an $\varepsilon$-neighborhood $N_i$ of $q_i$ for every $i=1,\dots,d$ for a sufficiently small constant $\varepsilon = \varepsilon(d)>0$.
When we draw a set $S$ of $n$ points uniformly and independently at random from $K$, some $d$-tuple $p_1,\dots,p_d$ of the points from $S$ satisfies $p_i \in N_i$ with positive probability.
It then remains to show that the points $p_1,\dots,p_d$ are contained in a $k$-hole in $S$ with positive probability.
To do that, we use the choice of $S''$, which enables us to choose a $(d-1)$-dimensional ellipsoid $F \subseteq K$ containing the points $p_1,\dots,p_d$.
Moreover, since each $p_i$ is close to $S''$, the choice of $S''$ also guarantees the existence of a $d$-dimensional ellipsoid $G \subseteq K$ of volume $1/(n-d)$ such that $F \subseteq G$.
We then choose $k-d$ pairwise disjoint caps $C_1,\dots,C_{k-d}$ of $G$, each with volume at least $c$ for some constant $c=c(d,k)>0$, and we show that, with positive probability, there is a point $p_{i+d}$ of $S$ inside each cap $C_i$ and there is no point of $P$ inside $G \setminus (C_1\cup\cdots\cup C_{k-d})$.
The points $p_1,\dots,p_k$ then determine a $k$-hole in $S$ with positive probability.

Although the main idea of the proof of Theorem~\ref{thm:largerHoles} is relatively simple, there are many technical steps along the way and we need to prove several auxiliary results.
As the first such result, we use the following lower bound on the volume of spherical caps.

\begin{lemma}[\cite{miccVoul10}]
\label{lem-capVolume}
For every integer $d \geq 2$ and $h \in (0,1]$, we have
\[\lambda_d(C(h)) > \left(\sqrt{2h-h^2}\right)^{d-1} \cdot \frac{h}{2 d} \cdot \lambda_d(B^d).\]
\end{lemma}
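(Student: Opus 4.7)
The natural approach is to construct a concrete body inside $C(h)$ whose volume is easy to compute and already matches the claimed lower bound. A direct Fubini slicing of $C(h)$, giving
\[
\lambda_d(C(h))=\lambda_{d-1}(B^{d-1})\int_{0}^{h}(2t-t^2)^{(d-1)/2}\,dt,
\]
is unhelpful here: the integrand is \emph{increasing} on $[0,h]$, so replacing it by its value at $t=h$ would produce an upper bound, not a lower bound. Hence an explicit inscribed body is needed.

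I would use the inscribed right cone. Let $e_1=(1,0,\dots,0)$ and let $D$ be the $(d-1)$-dimensional disk of radius $\sqrt{2h-h^2}$ centered at $(1-h,0,\dots,0)$ in the hyperplane $\{x_1=1-h\}$. Since $(1-h)^2+(2h-h^2)=1$, the boundary of $D$ lies on $S^{d-1}$, so $D\subseteq B^d\cap\{x_1=1-h\}\subseteq C(h)$. Set $K=\conv(\{e_1\}\cup D)$. By convexity of $C(h)$ we get $K\subseteq C(h)$; alternatively, a generic point $\alpha e_1+(1-\alpha)(1-h,y)$ with $|y|^2\le 2h-h^2$ has first coordinate $1-(1-\alpha)h\ge 1-h$ and squared norm $1-2h\alpha(1-\alpha)\le 1$, confirming membership in $C(h)$ directly. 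Because $K$ is a right cone over a $(d-1)$-ball of radius $\sqrt{2h-h^2}$ with apex at height $h$ above the base hyperplane,
\[
\lambda_d(K)=\frac{h}{d}\cdot\lambda_{d-1}(B^{d-1})\cdot\bigl(\sqrt{2h-h^2}\bigr)^{d-1}.
\]

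To finish, I would trade $\lambda_{d-1}(B^{d-1})$ for $\lambda_d(B^d)$ by slicing $B^d$ orthogonal to $e_1$:
\[
\lambda_d(B^d)=\lambda_{d-1}(B^{d-1})\int_{-1}^{1}(1-t^2)^{(d-1)/2}\,dt.
\]
For $d\ge 2$ the integrand $(1-t^2)^{(d-1)/2}$ is strictly less than $1$ on $(-1,1)$, so the integral is strictly less than $2$ and therefore $\lambda_{d-1}(B^{d-1})>\lambda_d(B^d)/2$. Plugging this into the cone volume yields
\[
\lambda_d(C(h))\ge\lambda_d(K)>\bigl(\sqrt{2h-h^2}\bigr)^{d-1}\cdot\frac{h}{2d}\cdot\lambda_d(B^d),
\]
which is the claimed inequality. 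There is no serious obstacle; the only point worth flagging is the initial observation that the naive slicing of $C(h)$ points the wrong way, which is why one must exhibit an explicit inscribed body, and the right cone over the equatorial disk of the cap is the cleanest such choice.
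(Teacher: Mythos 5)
Your proof is correct, and it is worth noting at the outset that the paper itself gives no proof of this lemma at all: it is imported verbatim from the cited reference \cite{miccVoul10}, so your argument is a genuinely self-contained alternative rather than a reconstruction. Every step checks out: the disk $D$ at height $x_1=1-h$ has its boundary on $S^{d-1}$ since $(1-h)^2+(2h-h^2)=1$; your direct computation that a convex combination $\alpha e_1+(1-\alpha)(1-h,y)$ has squared norm $1-2h\alpha(1-\alpha)\le 1$ is right (and is redundant given the convexity of $C(h)$ as the intersection of $B^d$ with a half-space, but harmless); the cone volume $\frac{h}{d}\lambda_{d-1}(B^{d-1})\bigl(\sqrt{2h-h^2}\bigr)^{d-1}$ is standard; and the strict inequality $\lambda_{d-1}(B^{d-1})>\lambda_d(B^d)/2$ from slicing $B^d$ is valid for $d\ge 2$ and correctly delivers the strict inequality claimed in the lemma for all $h\in(0,1]$. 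One small quibble: your dismissal of the slicing identity $\lambda_d(C(h))=\lambda_{d-1}(B^{d-1})\int_0^h(2t-t^2)^{(d-1)/2}\,dt$ as unusable is slightly too quick. Since $t(2-t)\ge t(2-h)=\frac{t}{h}\,h(2-h)$ on $[0,h]$, one gets $\int_0^h(2t-t^2)^{(d-1)/2}\,dt\ge\bigl(2h-h^2\bigr)^{(d-1)/2}\frac{2h}{d+1}$, which proves the lemma with the better factor $\frac{2h}{d+1}$ in place of $\frac{h}{d}$ (and then $\frac{h}{2d}$ after the same $\kappa_{d-1}$-for-$\kappa_d$ trade). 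So slicing does point the right way with the correct comparison function; your inscribed cone is simply the geometric incarnation of a slightly weaker comparison. This does not affect the validity of your proof, which establishes exactly the stated bound.
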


The following lemma is used to estimate the surface area of $C(h)$.

\begin{lemma}
\label{lem-capArea}
For every $h \in (0,1/4]$, we have
\[\lambda_{d-1}(C(h) \cap S^{d-1}) \leq (4\sqrt{h})^{d-1}\lambda_{d-1}(S^{d-1}).\]
\end{lemma}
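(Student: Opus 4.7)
The plan is to compare the spherical cap $C(h)\cap S^{d-1}$ with a flat $(d-1)$-dimensional disk via two elementary projections, one radial and one orthogonal.

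First I would use the central (radial) projection from the origin that maps $C(h)\cap S^{d-1}$ onto the tangent hyperplane $\{x_1=1\}$. Explicitly, a point $(\cos\theta, \sin\theta\cdot \omega)$ with $\omega\in S^{d-2}$ and $\theta\in[0,\alpha]$, where $\alpha=\arccos(1-h)$, is sent to $(1,\tan\theta\cdot\omega)$, so the image is the $(d-1)$-dimensional Euclidean disk of radius $\tan\alpha$. A direct Jacobian computation (sphere area element $\sin^{d-2}\theta\,d\theta\,dS^{d-2}$ versus planar element $\tan^{d-2}\theta\sec^2\theta\,d\theta\,dS^{d-2}$) gives a factor of $\sec^d\theta\ge 1$, so the central projection is area non-decreasing and therefore
\[
\lambda_{d-1}(C(h)\cap S^{d-1}) \;\le\; \lambda_{d-1}(B^{d-1})\cdot (\tan\alpha)^{d-1}.
\]

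Next I would use the assumption $h\le 1/4$, which gives $\cos\alpha=1-h\ge 3/4$ and hence
\[
\tan\alpha \;=\; \frac{\sqrt{2h-h^2}}{1-h} \;\le\; \frac{\sqrt{2h}}{3/4} \;=\; \frac{4\sqrt{2h}}{3}.
\]
Plugging this in and factoring out $(4\sqrt{h})^{d-1}$, and noting $(\sqrt{2}/3)^{d-1}\le 1$ because $\sqrt{2}/3<1$, yields
\[
\lambda_{d-1}(C(h)\cap S^{d-1}) \;\le\; \lambda_{d-1}(B^{d-1})\cdot (4\sqrt{h})^{d-1}.
\]

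To finish, I would compare $\lambda_{d-1}(B^{d-1})$ with $\lambda_{d-1}(S^{d-1})$ using the orthogonal projection of each of the two closed hemispheres of $S^{d-1}$ onto the equatorial ball $B^{d-1}$. This projection has Jacobian $|\cos\theta|\le 1$, so it is area-decreasing, which gives $\lambda_{d-1}(S^{d-1})\ge 2\lambda_{d-1}(B^{d-1})\ge \lambda_{d-1}(B^{d-1})$, and plugging this into the previous inequality completes the proof. I do not expect any serious obstacle: the only non-formal step is the Jacobian for the central projection, which is routine, and all remaining estimates are elementary numerical inequalities.
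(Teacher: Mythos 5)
Your proposal is correct, and it rests on the same geometric core as the paper's proof: both bound the cap by its central projection from the origin onto the tangent hyperplane $\{x_1=1\}$, where the image is a $(d-1)$-dimensional disk of radius $\tan\alpha=\sqrt{2h-h^2}/(1-h)$, and both exploit $h\le 1/4$ only to bound this radius by a constant times $\sqrt{h}$. The bookkeeping, however, is genuinely different in both halves. The paper avoids Jacobians entirely: it passes to solids via the exact identity $\lambda_{d-1}(C(h)\cap S^{d-1})/\lambda_{d-1}(S^{d-1})=\lambda_d(\overline{C})/\lambda_d(B^d)$ for the radial cone $\overline{C}=\conv(C(h)\cup\{0\})$, bounds $\overline{C}$ by the cone over the tangent-plane disk (a containment of convex bodies), computes that cone's volume as $\frac{1}{d}(\tan\alpha)^{d-1}\lambda_{d-1}(B^{d-1})$, and closes with the crude estimate $\lambda_{d-1}(B^{d-1})\le d2^{d-1}\lambda_d(B^d)$. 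You instead stay on the sphere, proving area-monotonicity of the central projection by the pointwise factor $\sec^d\theta\ge 1$, and you normalize via $2\lambda_{d-1}(B^{d-1})\le\lambda_{d-1}(S^{d-1})$, obtained from the area-decreasing orthogonal projection of the hemispheres — a tighter and cleaner comparison than the paper's $d2^{d-1}$ bound. What each buys: the paper's route needs only convexity and the cone volume formula, so it is integration-free; yours requires two (routine but real) Jacobian computations, in exchange for better constants — indeed you have slack to spare from the factor $(\sqrt{2}/3)^{d-1}$, whereas the paper absorbs its slack into the same final constant $4$. Both estimates, and your numerics ($1-h\ge 3/4$, $\sqrt{2h-h^2}\le\sqrt{2h}$), check out.
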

\begin{proof}
Let $\overline{C}$ be the convex hull of $C(h) \cup \{0\}$, where $0$ denotes the origin.
Then $\frac{\lambda_{d-1}(C(h) \cap S^{d-1})}{\lambda_{d-1}(S^{d-1})} = \frac{\lambda_d(\overline{C})}{\lambda_d(B^d)}$ and it thus suffices to find an upper bound on $\lambda_d(\overline{C})$.
The basis of $C(h)$ on $S^{d-1}$ is a $(d-1)$-dimensional ball of radius $\sqrt{2h-h^2}$.
Let $B$ be the $(d-1)$-dimensional ball with radius $\sqrt{2h-h^2}/(1-h)$ centered at $(1,0,\dots,0)$ lying in the hyperplane $x_1=1$.
The set $\overline{C}$ is then contained in the convex hull $\overline{B}$ of $B \cup \{0\}$.
Thus,
\[\lambda_d(\overline{C}) \leq \lambda_d(\overline{B}) = \frac{1}{d} \cdot \left(\frac{\sqrt{h(2-h)}}{1-h}\right)^{d-1} \cdot \lambda_{d-1}(B^{d-1}) \leq (4\sqrt{h})^{d-1} \lambda_d(B^d),\]
where we used $\sqrt{2-h}/(1-h) \leq 2$ for $h \leq 1/4$ together with a crude estimate $\lambda_{d-1}(B^{d-1}) \leq d2^{d-1}\lambda_d(B^d)$. 
We thus obtain $\lambda_{d-1}(C(h) \cap S^{d-1}) \leq (4\sqrt{h})^{d-1} \lambda_{d-1}(S^{d-1})$.
\end{proof}

Using this lemma, we show that, for every $k \in \mathbb{N}$, we can place $k$ pairwise disjoint spherical caps on $S^{d-1}$ so that each cap has height that depends only on $d$ and $k$.

\begin{lemma}
\label{lem-capsGreedy}
For integers $d \geq 2$, $k \geq d$, and $h \in (0,1/(64k^{2/(d-1)})]$, let $x_1\dots,x_d$ be points from~$S^{d-1}$.
Then, there are pairwise disjoint spherical caps $C(h,x_{d+1}),\dots,\allowbreak C(h,x_k)$ on $B^d$ such that $x_i \notin C(h,x_j)$ for all $i=1,\dots,d$ and $j=d+1,\dots,k$.
\end{lemma}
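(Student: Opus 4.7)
The plan is to choose the centers $x_{d+1},\dots,x_k$ greedily on $S^{d-1}$, adding them one at a time. At step $j\in\{d+1,\dots,k\}$ I want $x_j$ so that $C(h,x_j)$ avoids the points $x_1,\dots,x_d$ and is disjoint from the already placed caps $C(h,x_{d+1}),\dots,C(h,x_{j-1})$. Both conditions translate into $x_j$ avoiding certain spherical caps on $S^{d-1}$, so the whole argument reduces to a union bound showing that the resulting forbidden region has surface measure strictly less than $\lambda_{d-1}(S^{d-1})$.

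To make this translation precise, I would let $\alpha=\arccos(1-h)$ be the angular radius of $C(h)$. Writing $\theta(\cdot,\cdot)$ for angular distance on $S^{d-1}$, the condition $y\in C(h,x)$ is equivalent to $\theta(x,y)\le\alpha$, so requiring $x_i\notin C(h,x_j)$ for $i\le d$ is equivalent to $x_j\notin C(h,x_i)$. Requiring $C(h,x_j)\cap C(h,x_\ell)=\emptyset$ for a previously placed center $x_\ell$ is equivalent to $\theta(x_\ell,x_j)>2\alpha$, i.e.\ to $x_j$ lying outside the closed cap around $x_\ell$ of angular radius $2\alpha$. Using $1-\cos(2\alpha)=4h-2h^2\le 4h$, that latter cap is contained in $C(4h,x_\ell)$. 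Consequently, the forbidden region at step $j$ is contained in
\[
F_j=\bigcup_{i=1}^{d}\bigl(C(h,x_i)\cap S^{d-1}\bigr)\;\cup\;\bigcup_{\ell=d+1}^{j-1}\bigl(C(4h,x_\ell)\cap S^{d-1}\bigr).
\]

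Next I would bound $\lambda_{d-1}(F_j)$ via Lemma~\ref{lem-capArea}. Since $h\le 1/(64k^{2/(d-1)})\le 1/64$, we have $4h\le 1/4$, so Lemma~\ref{lem-capArea} applies to both $C(h)$ and $C(4h)$ and yields the common upper bound $(8\sqrt h)^{d-1}\lambda_{d-1}(S^{d-1})$ on the surface area of each cap contributing to $F_j$. Summing over the at most $k-1$ such caps gives $\lambda_{d-1}(F_j)\le (k-1)(8\sqrt h)^{d-1}\lambda_{d-1}(S^{d-1})$. The constant $64$ in the hypothesis on $h$ was calibrated precisely so that $(8\sqrt h)^{d-1}\le 1/k$, which makes $\lambda_{d-1}(F_j)\le \tfrac{k-1}{k}\lambda_{d-1}(S^{d-1})<\lambda_{d-1}(S^{d-1})$. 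Hence $S^{d-1}\setminus F_j$ has positive measure and an admissible $x_j$ exists, completing the greedy step.

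There is no real obstacle in the proof: the only mildly nontrivial step is the reduction from cap disjointness to the condition $x_j\notin C(4h,x_\ell)$, which is what allows Lemma~\ref{lem-capArea} to be applied uniformly to every forbidden cap, and the rest is a straightforward union bound tuned to the numerical choice $h\le 1/(64 k^{2/(d-1)})$.
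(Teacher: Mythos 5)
Your proof is correct and is essentially the paper's argument: the paper also selects the centers one at a time (phrased as induction on $k$), forbids the region $S^{d-1}\cap\bigcup_i C(4h,x_i)$, and applies Lemma~\ref{lem-capArea} with the bound $(k-1)(8\sqrt{h})^{d-1}<1$ guaranteed by $h\le 1/(64k^{2/(d-1)})$. The only cosmetic difference is that you verify disjointness via angular distance (caps of angular radius $\alpha$ are disjoint iff centers are more than $2\alpha$ apart, and the $2\alpha$-cap has height $4h-2h^2\le 4h$), whereas the paper runs the same reduction through chordal distances ($\sqrt{2h}$ and $2\sqrt{2h}$) and the triangle inequality.
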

\begin{proof}
We proceed by induction on $k$.
The statement is trivial for $k=d$, so we assume that there are pairwise disjoint spherical caps $C(h,x_{d+1}),\dots,C(h,x_{k-1})$ for some $k \geq d+1$ and $h \in (0,1/(64k^{2/(d-1)})] \subseteq (0,1/(64(k-1)^{2/(d-1)})]$ such that $x_i \notin C(h,x_j)$ for all $i=1,\dots,d$ and $j=d+1,\dots,k-1$.
Let $U = S^{d-1} \cap (\bigcup_{i=1}^{k-1} C(4h,x_i))$.

By Lemma~\ref{lem-capArea}, we have 
\[\lambda_{d-1}(U) \leq (k-1) (8\sqrt{h})^{d-1} \lambda_{d-1}(S^{d-1}).\]
Since $h \leq 1/(64k^{2/(d-1)})$, we obtain $\lambda_{d-1}(U) < \lambda_{d-1}(S^{d-1})$ and thus there is a point $x_k \in S^{d-1} \setminus U$.

For every $h' \in [0,1]$ and $x \in S^{d-1}$, the basis of each spherical cap $C(h',x)$ on $S^{d-1}$ is a $(d-1)$-dimensional ball of radius $\sqrt{2h'-(h')^2}$.
Thus, the distance between $x$ and any of point of $C(h',x)$ is at most $\sqrt{2h'-(h')^2 + (h')^2} = \sqrt{2h'}$.
On the other hand, every point of $S^{d-1}$ at distance at most $\sqrt{2h'}$ from $x$ lies in $C(h',x)$.
Since $x_k \notin U$, the distance between $x_k$ and $x_i$ is larger than $\sqrt{2\cdot 4h}=2\sqrt{2h}$ for every $i=1,\dots,k-1$.
Also, any point $y \in C(h,x_k)$ is at distance larger than $2\sqrt{2h}-\sqrt{2h} = \sqrt{2h}$ from $x_i$ and thus $y \notin C(h,x_i)$.
Therefore, by the induction hypothesis, the spherical caps $C(h,x_{d+1}),\dots,C(h,x_k)$ are pairwise disjoint and $x_i \notin C(h,x_j)$ for all $i =1,\dots,d$ and $j=d+1,\dots,k$.
\end{proof}

For an integer $n>d$, a $d$-dimensional ellipsoid $D \subseteq K$ with $\lambda_d({\rm conv}(D))=1/(n-d)$, and points $q_1,\dots,q_d$ on the boundary of $D$, we use $E(D,q_1,\dots,q_d)$ to denote the event that, for the set $S$ of $n-d$ points drawn uniformly independently at random from $K$, there is a $k$-hole $\{p_1,\dots,p_k\}$ in $S \cup \{q_1,\dots,q_d\}$ satisfying $p_i = q_i$ for every $i=1,\dots,d$. 

\begin{lemma}
\label{lem-probCaps}
There is $n_0=n_0(d,k) \in \mathbb{N}$ such that for every integer $n \geq n_0$, every $d$-dimensional ellipsoid $D \subseteq K$ with $\lambda_d({\rm conv}(D))=1/(n-d)$, and any points $q_1,\dots,q_d$ on the boundary of $D$, there is a constant $C = C(d,k)>0$ such that the probability of the event $E(D,q_1,\dots,q_d)$ is at least $C$.
\end{lemma}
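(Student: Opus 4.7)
The plan is to apply a volume-preserving affine transformation to reduce $D$ to a Euclidean ball, use Lemma~\ref{lem-capsGreedy} to carve out $k-d$ pairwise disjoint spherical caps of $D$ that avoid $q_1,\dots,q_d$, and then show that with constant probability the random points of $S$ populate exactly one per cap while leaving the rest of $D$ empty.

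First I would pick a volume-preserving affine map $T$ sending $D$ onto a ball $B$ of radius $r=(\kappa_d(n-d))^{-1/d}$ centered at the origin. Since $T$ preserves volume, $T(K)$ is still a unit-volume convex body containing $B$, and uniform samples in $K$ correspond under $T$ to uniform samples in $T(K)$; so I may assume $D=B$ and $q_1,\dots,q_d \in \partial B$. Setting $h := 1/(64k^{2/(d-1)})$ and $\tilde q_i := q_i/r \in S^{d-1}$, Lemma~\ref{lem-capsGreedy} (applied with $x_i=\tilde q_i$ for $i \le d$) furnishes unit vectors $x_{d+1},\dots,x_k$ such that the caps $C(h,x_{d+1}),\dots,C(h,x_k) \subseteq B^d$ are pairwise disjoint and none contains any $\tilde q_i$. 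The rescaled caps $C_j := r \cdot C(h,x_{d+j}) \subseteq B$ then satisfy, by Lemma~\ref{lem-capVolume} and $r^d=1/(\kappa_d(n-d))$, the lower bound $\lambda_d(C_j)\ge c_1/(n-d)$ for some constant $c_1=c_1(d,k)>0$.

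The crucial geometric observation is that the caps automatically force convex position. Fix any $p_j \in C_j$ for $j=1,\dots,k-d$. Each $q_i$ lies on the strictly convex boundary $\partial B$, hence it is an extreme point of $B$ and therefore of every convex subset of $B$ containing it. For each $p_j$, the hyperplane $H_j=\{y:\langle y,x_{d+j}\rangle=\langle p_j,x_{d+j}\rangle\}$ strictly separates $p_j$ from every other $q_i$ or $p_{j'}$: one has $\langle q_i,x_{d+j}\rangle<r(1-h)\le\langle p_j,x_{d+j}\rangle$ because $q_i\notin C_j$, and $\langle p_{j'},x_{d+j}\rangle<r(1-h)$ because $p_{j'}\in C_{j'}$ and $C_{j'}\cap C_j=\emptyset$. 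Hence $\{q_1,\dots,q_d,p_{d+1},\dots,p_k\}$ is in convex position, and its convex hull is a subset of $B$.

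To conclude, let $F$ be the event that exactly one of the $n-d$ random points lies in each $C_j$ and none lies in $B\setminus\bigcup_j C_j$. Under $F$, the $k-d$ points of $S\cap B$ together with $q_1,\dots,q_d$ form a $k$-hole: convex position comes from the previous step, and emptiness holds because every other point of $S$ lies outside $B$ and hence outside the convex hull. Writing $\pi_j:=\lambda_d(C_j)\ge c_1/(n-d)$ and $\pi_0:=1-1/(n-d)$, the multinomial distribution yields
\[
\Pr[F]=\frac{(n-d)!}{(n-k)!}\prod_{j=1}^{k-d}\pi_j\cdot\pi_0^{\,n-k}\ge\left(\frac{n-k+1}{n-d}\right)^{k-d} c_1^{\,k-d}\left(1-\tfrac{1}{n-d}\right)^{n-k}.
\]
The right-hand side converges to $c_1^{k-d}/e$ as $n\to\infty$, so $\Pr[F]\ge c_1^{k-d}/(2e)$ for all $n\ge n_0(d,k)$, which proves the lemma with $C(d,k):=c_1^{k-d}/(2e)$ since $F\subseteq E(D,q_1,\dots,q_d)$. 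The main obstacle is balancing two competing constraints on the caps: they must have volume of order $1/(n-d)$ so that the multinomial factor is a positive constant independent of $n$, yet they must also be positioned so that any choice of one point per cap is forced into convex position together with the arbitrary boundary points $q_i$; Lemma~\ref{lem-capsGreedy} with the single constant $h=h(d,k)$ delivers exactly the inner-product separations that let both requirements coexist.
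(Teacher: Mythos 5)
Your proof is correct and follows essentially the same route as the paper: a volume-preserving affine reduction of $D$ to a ball of volume $1/(n-d)$, Lemma~\ref{lem-capsGreedy} to place $k-d$ pairwise disjoint caps avoiding the $q_i$, Lemma~\ref{lem-capVolume} to lower-bound each cap's volume by $c_1/(n-d)$, and a constant-probability event that forces a $k$-hole through $q_1,\dots,q_d$. The only deviation is bookkeeping: you require exactly one sample point per cap and none elsewhere in the ball, computing the probability by a direct multinomial count, which neatly bypasses the paper's partition of $S$ into blocks, the conditioning on the emptiness of $D'$, and the ``closest point to $D'$'' selection, while your explicit halfspace-separation argument makes the convex-position claim (which the paper states tersely) fully rigorous.
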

\begin{proof}
Let $f \colon \mathbb{R}^d \to \mathbb{R}^d$ be a one-to-one affine mapping such that $f(D) = B^d$.
We set $h=1/(64k^{2/(d-1)})$.
By Lemma~\ref{lem-capsGreedy}, there are pairwise disjoint spherical caps $C(h,f(q_{d+1})),\dots,\allowbreak C(h,f(q_k))$ on $B^d$ such that $f(q_i) \notin C(h,f(q_j))$ for all $i=1,\dots,d$ and $j=d+1,\dots,k$.
We let $C'_i = f^{-1}(C(h,f(q_i)))$ for every $i=d+1,\dots,k$.
We then fix the set $D' = f^{-1}(B^d \setminus (\bigcup_{i=d+1}^k C(h,f(q_i)))) = D \setminus (\bigcup_{i=d+1}^k C'_i)$. 

Let $S$ be the set of $n-d$ points drawn at random from $K$.
Let $E(D',0)$ be the event that no point from $S$ lies in the set $D'$.
By Lemma~\ref{lem-capVolume}, the volume of each spherical cap $C(h,f(q_i))$ is at least $c \cdot \lambda_d(B^d)$ for some constant $c=c(d,k)>0$.
Since $f$ preserves ratios of volumes, the volume of each set $C'_i$ is at least $c/(n-d)$.
It then follows from the fact that the spherical caps $C(h,f(q_{d+1})),\dots,C(h,f(q_k))$ are pairwise disjoint that the volume of $D'$ is at most $\frac{1}{n-d} - (k-d)\frac{c}{n-d} = \frac{1-c(k-d)}{n-d}$.
We may assume that $\frac{1-c(k-d)}{n-d}>0$, as otherwise $D' = \emptyset$ and $\Pr[E(D',0)]=1$.

Since the volume of the set $D'$ is at most $(1-c(k-d))/(n-d)$, we have
\[\Pr[E(D',0)] \geq \left(1-\frac{1-c(k-d)}{n-d}\right)^{n-d} \geq e^{-2+2c(k-d)} \geq e^{-2},\]
where we used the inequality $1-x \geq e^{-2x}$ for $x \in [0,1/2]$.
Note that indeed $(1-c(k-d))/(n-d) \in [0,1/2]$ if $n$ is sufficiently large with respect to $d$ and $k$.

We partition the set $S$ into $k-d$ sets $B_{d+1},\dots,B_k$, each of size at least $\lfloor (n-d)/(k-d)\rfloor \geq (n-d)/(k-d) -1 \geq 1$.
For each $i \in \{d+1,\dots,k\}$, let $E(D',i)$ be the event that no point from $B_i$ appears in~$C'_i$.
Then
\begin{align*}
\Pr\left[E(D',i) \mid E(D',0)\right] &= \left(\frac{1 - \lambda_d(D')-\lambda_d(C'_i)}{1-\lambda_d(D')}\right)^{|B_i|} = \left(1-\frac{\lambda_d(C'_i)}{1-\lambda_d(D')}\right)^{|B_i|} \\
&\leq \left(1-\lambda_d(C'_i)\right)^{|B_i|} \leq \left(1-\frac{c}{n-d}\right)^{\frac{n-d}{k-d}-1}  \leq 2e^{-c/(k-d)},
\end{align*}
since $\lambda_d(C'_i) \geq c/(n-d)$ 
and $1/(1-\frac{c}{n-d}) \le 2$ for $n$ sufficiently large.
Thus the event $\overline{E(D',i)}$ that there is a point from $B_i$ lying in $C'_i$ satisfies $\Pr[\overline{E(D',i)} \mid E(D',0)] \geq 1-2e^{-c/(k-d)}$.
Since the events $\overline{E(D',d+1)},\dots,\overline{E(D',k)}$ are mutually independent, the conditioned probability that there is a point from $B_i$ in $C'_i$ for every $i \in \{d+1,\dots,k\}$ satisfies
\[\Pr\left[\bigcap_{i=d+1}^k\overline{E(D',i)} \mid E(D',0)\right] \geq \left(1-2e^{-c/(k-d)}\right)^{k-d}.\]

We now show that if the event $E(D',0) \cap \left(\bigcap_{i=d+1}^k\overline{E(D',i)}\right)$ holds, then there is a $k$-hole $\{p_1,\dots,p_k\}$ in $S \cup\{q_1,\dots,q_d\}$ satisfying $p_i=q_i$ for every $i=1,\dots,d$ and $p_j \in C'_j$ for every $j=d+1,\dots,k$.
That is, we show $E(D,q_1,\dots,q_d) \supseteq E(D',0) \cap \left(\bigcap_{i=d+1}^k\overline{E(D',i)}\right)$.
Since there is a point from $B_i$ in $C'_i$ for every $i=d+1,\dots,k-d$, we can choose a point $p_i \in C'_i \cap S$ that is closest to $D'$ among all points from $C'_i \cap S$.
We also let $p_i=q_i$ for every $i=1,\dots,d$.
By the choice of the spherical caps $C(h,q_{d+1}),\dots,C(h,q_k)$, the points $p_1,\dots,p_k$ are in convex position.
Since each point $p_i$ is the closest one to $D'$, there is no point from $S \cap C'_i$ in the interior of ${\rm conv}(\{p_1,\dots,p_k\})$.
Since there is no point from $S$ in the set $D'$, we see that the points $p_1,\dots,p_k$ form a desired $k$-hole in $S \cup \{q_1,\dots,q_d\}$.

It thus remains to estimate the probability of the event $E(D',0) \cap \left( \bigcap_{i=d+1}^k\overline{E(D',i)}\right)$ from below.
We have
\begin{align*}\Pr\left[E(D',0) \cap \left( \bigcap_{i=d+1}^k\overline{E(D',i)}\right)\right] &= \Pr\left[\bigcap_{i=d+1}^k\overline{E(D',i)} \mid E(D',0)\right]\cdot \Pr\left[E(D',0)\right]\\
&\geq \left(1-2e^{-c/(k-d)}\right)^{k-d} \cdot \frac{1}{e^2} > 0,
\end{align*}
which finishes the proof of Lemma~\ref{lem-probCaps}.
\end{proof}

For $\varepsilon > 0$, a $d$-dimensional simplex with vertices $r_1,\dots,r_{d+1}$ is \emph{$\varepsilon$-regular}, if there exists a regular $d$-dimensional simplex with vertices $r'_1,\dots,r'_{d+1}$ such that $r_i$ is at distance at most $\varepsilon$ from $r'_i$ for every $i=1,\dots,d+1$.

The following auxiliary result states that, for every $\varepsilon$-regular simplex $R$ with a sufficiently small $\varepsilon>0$, there is a circumscribed ellipsoid of $R$ with roughly the same diameter. 

\begin{lemma}
\label{lem-almostRegular}
For an integer $d \geq 1$ and a real number $D >0$, let $\varepsilon \in (0,D/(4\sqrt{2d(d+1)}))$.
If $R$ is a $d$-dimensional $\varepsilon$-regular simplex in $\mathbb{R}^d$ with diameter $D$, then there is a number $c = c(d)$ and a circumscribed ellipsoid of $R$ with diameter at most $c D$.
\end{lemma}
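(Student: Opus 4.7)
The plan is to exhibit a ball (which is an ellipsoid) circumscribing $R$ whose diameter is within a universal constant factor of $D$. First, let $R'$ be the regular $d$-simplex with vertices $r'_1,\dots,r'_{d+1}$ witnessing the $\varepsilon$-regularity of $R$, and let $\ell$ denote the common edge length of $R'$. Since $\|r_i-r'_i\|\leq\varepsilon$ for every $i$, the triangle inequality yields $\bigl|\,\|r_i-r_j\|-\ell\,\bigr|\leq 2\varepsilon$ for all $i\neq j$. Since the diameter of $R$ is $D$, this forces $\ell\in[D-2\varepsilon,D+2\varepsilon]$; by the hypothesis $\varepsilon<D/(4\sqrt{2d(d+1)})$ we have in particular $2\varepsilon\leq D$, so $\ell\leq 2D$.

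Next, I invoke the standard fact that the regular $d$-simplex $R'$ of edge length $\ell$ has a circumscribed ball centered at its centroid $c'$ with radius $\rho:=\ell\sqrt{d/(2(d+1))}$. For every vertex $r_i$ of $R$,
\[
\|r_i-c'\|\;\leq\;\|r_i-r'_i\|+\|r'_i-c'\|\;\leq\;\varepsilon+\rho.
\]
Thus the closed ball $B$ centered at $c'$ with radius $\varepsilon+\rho$ contains every vertex of $R$, and by convexity it contains $R$ itself. Its diameter equals
\[
2\varepsilon+\ell\sqrt{\tfrac{2d}{d+1}}\;\leq\;2\varepsilon+2D\sqrt{\tfrac{2d}{d+1}}\;\leq\;D\bigl(1+2\sqrt{2}\bigr),
\]
using $\ell\leq 2D$, $2\varepsilon\leq D$, and $\sqrt{2d/(d+1)}\leq\sqrt{2}$. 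Taking $c=c(d):=1+2\sqrt{2}$ (which is in fact independent of $d$) proves the lemma.

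The argument is essentially a single perturbation estimate once one recalls the circumradius formula for the regular simplex, so there is no real obstacle; the only care needed is in verifying that the quantitative smallness assumption on $\varepsilon$ suffices to make the edge-length perturbation negligible compared to $D$, which the bound $\varepsilon<D/(4\sqrt{2d(d+1)})$ easily guarantees (in particular $2\varepsilon\leq D$). If one wants a cleaner constant, one can sharpen $\ell\leq 2D$ to $\ell\leq D(1+o_d(1))$ using the full strength of the hypothesis, giving $c(d)\to\sqrt{2d/(d+1)}$ as $\varepsilon/D\to 0$, but for the stated lemma any universal $c$ suffices.
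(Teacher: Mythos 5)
There is a genuine gap, and it lies in the meaning of \emph{circumscribed}. In this paper a circumscribed ellipsoid of the simplex $R$ is an ellipsoid whose boundary passes through all $d+1$ vertices of $R$, not merely one that contains $R$. This is forced by how the lemma is used in the proof of Theorem~\ref{thm:largerHoles}: the ellipsoid $F$ produced here is extended to an ellipsoid $G$ having $F$ as a hyperplane section, and then Lemma~\ref{lem-probCaps} is applied to $G$ and the points $p_{i_1},\dots,p_{i_d}$ --- and that lemma explicitly requires the points $q_1,\dots,q_d$ to lie \emph{on the boundary} of the ellipsoid (they become centers of spherical caps on $S^{d-1}$ after the affine map, and the convex-position argument for the $k$-hole depends on this). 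Your ball $B$ of radius $\varepsilon+\rho$ centered at the centroid $c'$ of $R'$ contains the vertices $r_i$ strictly in its interior in general (they sit at distance between $\rho-\varepsilon$ and $\rho+\varepsilon$ from $c'$), so it is not circumscribed in the required sense. A telltale sign that you have proved the wrong statement: under the containment-only reading the lemma is trivial and the hypothesis on $\varepsilon$ is pointless, since any set of diameter $D$ lies in a ball of radius $D$ centered at one of its points. The entire content of the lemma is that an ellipsoid \emph{through the vertices} can be found with controlled diameter, and that is exactly where the quantitative $\varepsilon$-regularity is needed: a nearly degenerate simplex of diameter $D$ has circumscribed spheres only of enormous radius.

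For comparison, the paper's proof takes the affine map carrying $R'$ to $R$ (vertex to vertex) and lets $E$ be the image of the circumscribed ball $B'$ of $R'$; then $E$ passes through the vertices of $R$, and since $B'=dB$ for the inscribed ball $B$ of $R'$, the copy $\frac{1}{d}E$ is inscribed in $R$. The diameter of $E$ is then bounded by controlling its semi-axes $a_1\leq\dots\leq a_d$: the volume satisfies $\kappa_d(a_1\cdots a_d)\leq d^d\lambda_d(R)\leq d^dD^d/d!$, while the $\varepsilon$-regularity hypothesis (this is where the bound $\varepsilon<D/(4\sqrt{2d(d+1)})$ enters) guarantees that $R$, hence $E$, contains a ball of radius $r\geq D/(4\sqrt{2d(d+1)})$, so $a_1,\dots,a_{d-1}\geq r$ and $a_d\leq d^dD^d/(d!\,\kappa_d\,r^{d-1})\leq cD$. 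If you want to salvage your more elementary route, you would have to work with the actual circumsphere of $R$ (the unique sphere through its $d+1$ vertices) and prove a quantitative bound on its radius from the nondegeneracy supplied by $\varepsilon$-regularity; your perturbation estimate as written does not do this.
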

\begin{proof}
Let $r_1,\dots,r_{d+1}$ be the vertices of $R$.
Since $R$ is $\varepsilon$-regular, there is a regular simplex $R'$ with vertices $r'_1,\dots,r'_{d+1}$ such that $r_i$ is at distance at most $\varepsilon$ from $r'_i$ for every $i=1,\dots,d+1$.
By the choice of $R$, the diameter of $R'$ is at least $D-2\varepsilon$ and most $D + 2\varepsilon$.
In particular, the side length $a$ of $R'$ satisfies $a \in [D-2\varepsilon,D+2\varepsilon] \subseteq [D/2,2D]$, as $\varepsilon \leq D/4$. 

The distance of the barycenter of $R'$ from its facets is $a\sqrt{\frac{d+1}{2d}}\cdot \frac{1}{d+1} = \frac{a}{\sqrt{2d(d+1)}}$. 
Thus $R'$ contains an inscribed $d$-dimensional ball $B$ of radius $\frac{a}{\sqrt{2d(d+1)}}$.
Since $\varepsilon < \frac{D}{4\sqrt{2d(d+1)}}$ and $a \geq D/2$, we have $\varepsilon < \frac{a}{2\sqrt{2d(d+1)}}$. 
On the other hand, since every vertex of $R'$ lies at distance $a\sqrt{\frac{d+1}{2d}}\cdot \frac{d}{d+1} = a\sqrt{\frac{d}{2(d+1)}}$ from the barycenter of $R'$, there is a circumscribed ball $B'$ of $R'$ with radius $a\sqrt{\frac{d}{2(d+1)}}$.
Note that $B '= dB$.

Since $R$ is an affine image of $R'$, there is a circumscribed ellipsoid $E$ of $R$ such that $R$ contains an inscribed copy of $\frac{1}{d}E$.
If $a_1 \leq \dots \leq a_d$ are the semi axes of $E$, then the volume $V$ of $E$ equals $\kappa_d (a_1\cdots a_d)$, where $\kappa_d$ is the volume of the $d$-dimensional unit ball.
Clearly, the volume of $R$ is at most $D^d/d!$.
Since $\frac{1}{d} E \subseteq R \subseteq E$, we have $Vd^{-d} \leq \lambda_d(R) \leq V$ and, in particular, $V \leq d^d\lambda_d(R)\leq d^d D^d/d!$.

Since $B \subseteq R'$ and $\varepsilon < \frac{a}{2\sqrt{2d(d+1)}}$, it follows that $R$ contains a ball of radius $r=\frac{a}{\sqrt{2d(d+1)}}-\varepsilon \geq \frac{a}{2\sqrt{2d(d+1)}}$ and thus $a_1,\dots,a_d \geq r$.
The radius $r$ is at least $\frac{D}{4\sqrt{2d(d+1)}}$, since $a \geq D/2$.
Since $\kappa_d (a_1\cdots a_d) = V \leq d^d D^d / d!$, we obtain 
\[a_d \leq \frac{d^dD^d}{d!\cdot \kappa_d \cdot a_1\cdots a_{d-1}} \leq \frac{d^d D^d}{d!\cdot \kappa_d \cdot r^{d-1}} \leq \frac{d^d (4\sqrt{2d(d+1)})^{d-1}}{d! \cdot \kappa_d} \cdot D,\]
where the second inequality follows from $a_1,\dots,a_{d-1} \geq r$ and the third one from $r \geq \frac{D}{4\sqrt{2d(d+1)}}$.
Since $a_1\leq \cdots \leq a_d$, the diameter of $E$ equals $2a_d \leq c D$, where $c=\frac{2d^d (4\sqrt{2d(d+1)})^{d-1}}{d! \cdot \kappa_d}$.
\end{proof}

We will also need the following classical result, known as \emph{John's lemma}~\cite{Joh48}.

\begin{lemma}[{\cite[Theorem 13.4.1]{Mat02}}]
\label{lem-LownerJohnElips}
For every $d$-dimensional convex body $K\subseteq\mathbb{R}^d$, there is a $d$-dimensional ellipsoid $E$ with the center $s$ satisfying
\[
E\subseteq K\subseteq s+d(E-s).
\]
\end{lemma}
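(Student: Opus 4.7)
The plan is to prove this classical theorem of John by producing the ellipsoid of maximum volume inscribed in $K$ (the \emph{John ellipsoid}) and showing that the statement holds for this particular $E$. The proof proceeds in four steps.

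First, I would establish existence and uniqueness of the maximum-volume inscribed ellipsoid. Parameterize ellipsoids by their center $s \in \RR^d$ and a positive definite symmetric matrix $A$ via $E_{A,s}=s+A(B^d)$; then $\vol(E_{A,s}) = \det(A)\kappa_d$. The set of pairs $(A,s)$ for which $E_{A,s}\subseteq K$ is compact (using that $K$ is bounded and has nonempty interior to bound $\|A\|$ from above and $\det(A)$ from below), so a maximizer exists. Uniqueness follows from strict log-concavity of $\det^{1/d}$ on positive definite matrices combined with the fact that $\frac12 E_{A_1,s_1}+\frac12 E_{A_2,s_2} \subseteq K$ by convexity of $K$.

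Second, since the statement is affinely invariant (affine maps send ellipsoids to ellipsoids and preserve ratios of volumes), I apply an affine transformation so that the John ellipsoid becomes the unit ball $B^d$ centered at the origin, reducing the problem to showing $K \subseteq dB^d$.

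Third, I argue by contradiction. Suppose there is a point $p\in K$ with $\|p\|>d$; by rotational symmetry, take $p=te_1$ with $t>d$. Convexity of $K$ gives $C:=\conv(B^d\cup\{p\})\subseteq K$, so it suffices to find an ellipsoid $E' \subseteq C$ with $\vol(E')>\vol(B^d)$, contradicting maximality of $B^d$.

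Fourth, and this is the main obstacle, I construct $E'$ explicitly as an ellipsoid of revolution about the $x_1$-axis,
\[
E' = \Bigl\{x\in\RR^d : \frac{(x_1-c)^2}{a^2}+\frac{x_2^2+\cdots+x_d^2}{b^2}\le 1\Bigr\},
\]
with $\vol(E')=\kappa_d\, a\, b^{d-1}$. The containment $E'\subseteq C$ reduces to two tangency conditions: tangency with the spherical cap of $B^d$ visible from $p$, and tangency with the lateral cone (the union of tangent hyperplanes from $p$ to $B^d$). These conditions pin down $a$ and $c$ as explicit functions of $b$ and $t$; substituting and optimizing over $b$, the objective $a b^{d-1}$ exceeds $1$ precisely when $t>d$. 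The technical heart of the proof is this calculus computation, where the cutoff $t=d$ emerges naturally from balancing the gain in one semiaxis against the loss in the other $d-1$ semiaxes when deforming $B^d$ toward $p$.
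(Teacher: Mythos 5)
The paper does not prove this lemma at all---it is quoted as John's lemma with a citation to Matou\v{s}ek's book, and your proposal reconstructs essentially the proof given in that cited source: existence of the maximal-volume inscribed ellipsoid by compactness, affine normalization so that it becomes $B^d$, and a contradiction via a larger ellipsoid of revolution inside $C=\conv(B^d\cup\{p\})$, where your optimization correctly produces the threshold $t=d$ (with $a=1+c$ and $b^2=\bigl((t-c)^2-(1+c)^2\bigr)/(t^2-1)$, the derivative of $\log\bigl(a^2b^{2(d-1)}\bigr)$ at $c=0$ is $2-2(d-1)/(t-1)$, positive precisely when $t>d$). Two small points to tighten: the spherical constraint comes from the cap \emph{not} visible from $p$ (the cap visible from $p$ lies in the interior of $C$, not on its boundary), and the containment $E'\subseteq C$ does not literally reduce to two tangency conditions---one must also verify that the ellipsoid does not cross the sphere or the lateral cone elsewhere (e.g.\ a curvature comparison at the antipodal touching point and a check that the cone tangency occurs on the actual lateral facet), which is exactly the explicit verification carried out in the cited source.
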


Finally, we proceed with the proof of Theorem~\ref{thm:largerHoles}.
Let $K$ be a $d$-dimensional convex body of unit volume and let $k \geq d+1$ be an integer.
We set $\varepsilon>0$ to be sufficiently small with respect to $d$.

By Lemma~\ref{lem-LownerJohnElips}, there is a $d$-dimensional ellipsoid $E$ with the center $s$ that satisfies $E\subseteq K\subseteq s+d(E-s)$.
In particular, $\lambda_d(E) \geq \lambda_d(K)/d^d = d^{-d}$.
By applying a suitable affine volume preserving transformation, we can assume without loss of generality that $E$ is a $d$-dimensional ball $B$ of radius $r$ with the center at the origin.
Since $d^{-d} \leq \lambda_d(B) = \kappa_d r^d  \leq \pi^{d/2}r^d$, we have $r \geq 1/(d\sqrt{\pi})$.

We let $B'$ be the ball of radius $r' = r/2$ centered at the origin and $B''$ be the ball of radius $r''=(r'-3\varepsilon)/(2c+1)$ centered at the origin, where $c=c(d)$ is the constant from Lemma~\ref{lem-almostRegular}.
Since $3\varepsilon \leq 1/(4d\sqrt{\pi}) \leq r'/2 = r/4$ for $\varepsilon=\varepsilon(d)$ sufficiently small, we have $r'' \geq r/(8c+4)$ and the volume of the ball $B''$ is at least $(4d(2c+1))^{-d}$.
Moreover, since $r' = r'' + 3\varepsilon + 2cr''$, all points lying at distance at most $2cr''+3\varepsilon$ from an arbitrary point of $B''$ lie in $B'$.

Let $S''$ be a $(d-1)$-dimensional sphere obtained as the intersection of the boundary of~$B''$ with the hyperplane $x_d=0$  and let $q_1,\dots,q_d$ be points of a regular $(d-1)$-dimensional simplex $R'$ inscribed to $S''$.
We let $N_i$ be an $\varepsilon$-neighborhood of $q_i$ for each $i=1,\dots,p$.
Note that $\lambda_d(N_i) = \varepsilon^d\lambda_d(B^d)$ and $N_i \subseteq B'$ for every $i=1,\dots,d$. 

Let $p_1,\dots,p_n$ be the points drawn independently and uniformly at random from~$K$.
Assume that some distinct points $p_{i_1},\dots,p_{i_d}$ satisfy $p_{i_j} \in N_j$ for every $j=1,\dots,d$.
We will show that with positive probability that depends only on $d$ and $k$, there is a $k$-hole in $S$ containing the points $p_{i_1},\dots,p_{i_d}$.

Let $R$ be the $(d-1)$-dimensional simplex with vertices $p_{i_1},\dots,p_{i_d}$.
Since $p_{i_j} \in N_j$ for every $j=1,\dots,d$, the simplex $R$ is $\varepsilon$-regular.
Since $R' \subseteq B''$, the diameter of $R$ is at most $2r''+2\varepsilon$ and at least $2r''-2\varepsilon \geq r/(8c+4) -2\varepsilon \geq 1/(d\sqrt{\pi}(8c+4)) -2\varepsilon > 4(\sqrt{2(d-1)d})\varepsilon$ for $\varepsilon=\varepsilon(d)$ sufficiently small.
Thus, by Lemma~\ref{lem-almostRegular}, there is a circumscribed $(d-1)$-dimensional ellipsoid $F$ to $R$ with diameter at most $2c(r''+\varepsilon)$.
Since the points $p_{i_1},\dots,p_{i_d}$ lie at distance at most $\varepsilon$ from $B''$ and all points lying at distance at most $2cr''+3\varepsilon$ from an arbitrary point of $B''$ lie in $B'$, the triangle inequality gives $F \subseteq B'$ and, in particular, the diameter of $F$ is at most $2r'$.
Since $B' \subseteq K$, we have $F \subseteq K$ as well.

Since $F$ is circumscribed to the $\varepsilon$-regular simplex $R$, $F$ contains a $(d-1)$-dimensional ball of radius at least $t$ for some constant $t=t(d)>0$ and thus the volume of $F$ is at least $t^{d-1}$.
Let $G$ be a $d$-dimensional ellipsoid of $d$-dimensional volume $1/(n-d)$ such that $F$ is the intersection of $G$ with a hyperplane.
All points of $G$ lie at distance at most $d/((n-d) \cdot \lambda_{d-1}(F))$ from $F$, since otherwise $G$ contains the cone with basis $F$ and height larger than $d/((n-d) \cdot \lambda_{d-1}(F))$, which has $d$-dimensional volume larger than $\frac{1}{d}d\lambda_{d-1}(F) /((n-d) \cdot \lambda_{d-1}(F))=1/(n-d)$.
Since $\lambda_{d-1}(F) \geq t^{d-1}$, by choosing $n$ sufficiently large with respect to $d$, we have $d/((n-d) \cdot \lambda_{d-1}(F)) \leq r/2$.
Since all points at distance at most $r/2$ from $B'$ lie in $B$ and since $F \subseteq B'$, we see that the set of all points lying at distance at most $r/2$ from $F$ lie in $B$.
It follows that $G \subseteq B \subseteq K$.

By Lemma~\ref{lem-probCaps} applied to $G$ and points $p_{i_1},\dots,p_{i_d}$, the probability that there is a $k$-hole $\{p'_1\dots,p'_k\}$ in $S$ with $p'_j=p_{i_j}$ for $j=1,\dots,d$ is at least $C$ for some positive constant $C=C(d,k)$.

Let $E(i_1,\dots,i_d)$ be the event that $p_{i_j} \in N_j$ for every $j=1,\dots,d$.
The probability that $p_{i_j} \in N_j$ equals $\lambda_d(N_j) = (\varepsilon^d\lambda_d(B^d))$ and thus $\Pr[E(i_1\dots,i_d)] = (\varepsilon^d\lambda_d(B^d))^d$.
If $H(i_1,\dots,i_d)$ denotes the event that there is a $k$-hole in $S$ containing the points $p_{i_1},\dots,p_{i_d}$, then we have $\Pr[H(i_1,\dots,i_d) \mid E(i_1,\dots,i_d)] \geq C$.
Altogether, we obtain $\Pr[H(i_1,\dots,i_d)] \geq C(\varepsilon^d\lambda_d(B^d))^d$.

It follows that the expected number of $d$-tuples of points from $S$ that are contained in a $k$-hole in $S$ is at least $C(\varepsilon^d\lambda_d(B^d))^d\binom{n}{d}$.
Since every $k$-hole contains $\binom{k}{d}$ $d$-tuples of points, the expected number of $k$-holes in $S$ is at least \[C(\varepsilon^d\lambda_d(B^d))^d\binom{k}{d}^{-1}\binom{n}{d}.\]
This finishes the proof of Theorem~\ref{thm:largerHoles}.

\section{Proof of Theorem~\ref{thm:improvedLowerBound}}
\label{sec:proof_thm_improvedLowerBound}

Here we prove the lower bound $\lim_{n \to \infty} n^{-d}EH_{d,d+1}^K(n) \geq \frac{2}{(d-1)!p_{d-1}}$ for every integer $d \geq 2$ and every convex body $K \in \mathcal{K}_d$, where $p_{d-1}$ is the maximum probability that the convex polytope formed by the convex hull of $d+1$ points chosen uniformly and independently at random from a convex body in $\mathbb{R}^{d-1}$ has only $d$ vertices.
We also prove Corollary~\ref{cor:improvedLowerBound} and Proposition~\ref{prop:improvedLowerBound2} at the end of this section.

First, we state some definitions.
Recall that $\lambda_d$ denotes the Lebesgue measure on~$\RR^d$.
For a subspace~$E$ of $\RR^d$, we use $\lambda_E$ to denote the Lebesgue measure on~$E$ and, for a compact convex set $F$ with affine hull $E$, we use $\lambda_F$ to denote the restriction of $\lambda_E$ to~$F$.

Recall that the volume of the $d$-dimensional unit ball~$B^d$ in $\mathbb{R}^d$ is denoted by
\[
\kappa_d = \lambda_d(B^d) = \frac{\pi^{d/2}}{\Gamma(1+d/2)}.
\]
Similarly, we denote the surface area of the $d$-dimensional unit sphere~$S^{d-1}$ by
\[
\omega_d = \lambda_{d-1}(S^{d-1}) = d \cdot \kappa_d = \frac{2\pi^{d/2}}{\Gamma(d/2)}.
\]
We also let $\Delta_q(x_0,\dots,x_q)$ be the $q$-dimensional volume of the simplex $\conv(\{x_0,\ldots,x_q\})$.

For integers $d$ and $k$ with $1 \le k \le d$, we let $A(d,k)$ be the affine Grassmannian of $k$-dimensional affine subspaces of $\RR^d$ equipped with its standard topology.
The Grassmannian $A(d,k)$ is also equipped with the unique rigid motion invariant Haar measure $\mu_{k}$, normalized by $\mu_k(\{E \in A(d,k) : E \cap B^d \neq \emptyset \})=\kappa_{d-k}$. 
For more background, see Sections~1.3 and~5.1 in \cite{SchneiderWeil2008}.

The following result is a special case of the affine Blaschke--Petkantschin formula; see Theorem~7.2.7 in~\cite{SchneiderWeil2008}.

\begin{lemma}
\label{lem:integral}
For every integer $d \geq 2$, if $K'$ is a $d$-dimensional convex body, then
\[
 \lambda_d(K')^d
= (d-1)!\frac{\omega_d}{\omega_1} \int_{A(d,d-1)}\int_{(K' \cap E)^d}\Delta_{d-1} \; {\rm d}\lambda^d_E \mu_{d-1}({\rm d} E).
\]
\end{lemma}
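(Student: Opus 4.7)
The plan is to derive this identity as a direct specialization of the general affine Blaschke--Petkantschin formula, which is the content of \cite[Theorem~7.2.7]{SchneiderWeil2008}. In its general form, that theorem asserts that for integers $0 \le q \le d$ and any non-negative measurable function $f\colon(\mathbb{R}^d)^{q+1}\to\mathbb{R}$,
\[
\int_{(\mathbb{R}^d)^{q+1}} f(x_0,\dots,x_q)\,{\rm d}\lambda^{q+1}
= b_{d,q}\int_{A(d,q)}\int_{E^{q+1}} f(x_0,\dots,x_q)\,\Delta_q(x_0,\dots,x_q)^{d-q}\,{\rm d}\lambda_E^{q+1}\,\mu_q({\rm d}E),
\]
where the constant $b_{d,q}$ is explicit in terms of the surface areas $\omega_1,\dots,\omega_d$ and a factorial factor. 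The strategy is to take $q=d-1$ (so that $q+1 = d$ and $d-q = 1$) and to choose $f$ to be the indicator function of the product $(K')^{d}$.

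With this specialization, the left-hand side becomes $\int_{(K')^d}\,{\rm d}\lambda^{d} = \lambda_d(K')^d$, while $\Delta_q^{d-q} = \Delta_{d-1}^1 = \Delta_{d-1}$ and the inner integration on the right is restricted to $(K'\cap E)^d$. Hence
\[
\lambda_d(K')^d = b_{d,d-1}\int_{A(d,d-1)}\int_{(K'\cap E)^d}\Delta_{d-1}\,{\rm d}\lambda_E^d\,\mu_{d-1}({\rm d}E),
\]
and it only remains to identify $b_{d,d-1}$ with the stated constant $(d-1)!\,\omega_d/\omega_1$.

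The last step is a bookkeeping calculation: the standard form of $b_{d,q}$ is
\[
b_{d,q} = \frac{[q!]^{d-q}\,\omega_{d-q+1}\,\omega_{d-q+2}\cdots\omega_d}{\omega_1\,\omega_2\cdots\omega_q},
\]
so plugging in $q = d-1$ collapses the numerator/denominator to $(d-1)!\cdot\omega_d\cdots\omega_2/(\omega_1\cdots\omega_{d-1}) = (d-1)!\,\omega_d/\omega_1$ by telescoping. The only mildly delicate point is making sure the normalization of $\mu_{d-1}$ used in~\cite{SchneiderWeil2008} coincides with the one fixed in this paper (normalized so that $\mu_k(\{E : E\cap B^d\neq\emptyset\}) = \kappa_{d-k}$); if the conventions differ, a constant rescaling is absorbed into $b_{d,d-1}$. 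Thus the main ``obstacle'' is purely notational: tracking the constant through the two different conventions and verifying the telescoping. No genuinely new idea is required beyond quoting \cite[Theorem~7.2.7]{SchneiderWeil2008} and instantiating it.
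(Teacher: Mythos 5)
Your proposal is correct and coincides with the paper's treatment: the paper gives no independent argument but states the lemma as precisely the special case $q=d-1$ of the affine Blaschke--Petkantschin formula \cite[Theorem~7.2.7]{SchneiderWeil2008}, and your instantiation---indicator of $(K')^d$, exponent $\Delta_{d-1}^{d-q}=\Delta_{d-1}$, and the telescoping $b_{d,d-1}=(d-1)!\,\omega_2\cdots\omega_d/(\omega_1\cdots\omega_{d-1})=(d-1)!\,\omega_d/\omega_1$---is exactly right (a quick sanity check: $b_{d,d}=1$). Your normalization caveat is moot, since the paper adopts the Schneider--Weil convention $\mu_k(\{E \in A(d,k): E\cap B^d\neq\emptyset\})=\kappa_{d-k}$ verbatim, so no rescaling is needed.
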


Let $EV_d^{K'}$ denote the expected $d$-dimensional volume of the convex hull of $d+1$ points drawn uniformly and independently at random from a convex body $K' \subseteq \mathbb{R}^d$.
That is,
\[
EV_d^{K'} 
= \frac{1}{\lambda(K')^{d+1}}
\int_{K'} \ldots \int_{K'} \Delta_d(x_0,\ldots,x_d) 
\; \lambda({\rm d} x_0) \cdots \lambda({\rm d} x_d). 
\]

Let $K$ be a convex body in $\RR^d$ of unit $d$-dimensional volume. 
Since $\lambda_d(K)=1$, Lemma~\ref{lem:integral} gives
\begin{equation}
\label{eq-improvedLowerBound1}
1 = (d-1)!\frac{\omega_d}{\omega_1} \int_{A(d,d-1)}\int_{(K \cap E)^d}\Delta_{d-1} \; {\rm d}\lambda^d_E \mu_{d-1}({\rm d} E), 
\end{equation}
where the right side equals 
\begin{equation}
\label{eq-improvedLowerBound2}
(d-1)!\frac{\omega_d}{\omega_1} \int_{A(d,d-1)} \lambda_{d-1}(K \cap E)^{d} EV^{K \cap  E}_{d-1} \; \mu_{d-1}({\rm d} E)
\end{equation}
by the definition of $EV_{d-1}^{K \cap E}$.
Since
\begin{equation}
\label{eq-improvedLowerBoundEstimate}
\frac{(d+1)EV_{d-1}^{K \cap E}}{\lambda_{d-1}(K \cap E)}= p^{K \cap E}_{d-1} \leq p_{d-1}
\end{equation}
(see~\cite{klee69,schneider88}), we have $EV_{d-1}^{K \cap E}  \leq \frac{\lambda_{d-1}(K \cap E)p_{d-1}}{d+1}$. 
Using this estimate,
we bound~\eqref{eq-improvedLowerBound2} from above by
\[\frac{p_{d-1}}{d+1}(d-1)!\frac{\omega_d}{\omega_1} \int_{A(d,d-1)} \lambda_{d-1}(K \cap E)^{(d+1)} \; \mu_{d-1}({\rm d} E).\] 
By combining this estimate with~\eqref{eq-improvedLowerBound1}, we obtain
\begin{equation}
\label{eq:improvedLowerBound3}
\frac{\omega_1(d+1)}{\omega_d(d-1)!p_{d-1}} \leq \int_{A(d,d-1)} \lambda_{d-1}(K \cap E)^{d+1}\; \mu_{d-1}({\rm d} E).
\end{equation}

Now, we apply a result of Reitzner and Temesvari~\cite[Theorem~1.4]{ReitznerTemesvari2019}, which states
\[
\lim_{n \to \infty}
n^{-d} EH_{d,d+1}^K(n) 
= \frac{d \kappa_d}{(d+1)} \lambda_d(K)^{-d} \int_{A(d,d-1)} \lambda_{d-1}(K\cap E)^{d+1} \; \mu_{d-1}({\rm d} E).
\]
Together with~\eqref{eq:improvedLowerBound3}, $\lambda_d(K)=1$, $\omega_d = d \kappa_d$, and $\omega_1=2$, this yields the desired estimate
\[\lim_{n \to \infty}
n^{-d} EH_{d,d+1}^K(n)  \geq \frac{d \kappa_d}{(d+1)}  \cdot \frac{\omega_1(d+1)}{\omega_d(d-1)!p_{d-1}} = \frac{2}{(d-1)!p_{d-1}},\]
which completes the proof of Theorem~\ref{thm:improvedLowerBound}.
\qed

\medskip

We now prove Corollary~\ref{cor:improvedLowerBound} using
the following bounds on $p^K_2$ by Blaschke~\cite{Blaschke1917,Blaschke1923} and a result of Groemer~\cite{Groemer1973}.

\begin{theorem}[{\cite{Blaschke1917,Blaschke1923}}]
\label{thm:Blaschke}
For every 2-dimensional convex body $K \subseteq \mathbb{R}^2$, we have
\[
\frac{35}{12 \pi^2}
\le 
p^K_2
\le \frac{1}{3}.
\]
Moreover, the left inequality is tight if and only if $K$ is an ellipse, and the right inequality is tight if and only if $K$ is a triangle.
\end{theorem}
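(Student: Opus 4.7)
The plan is to reduce both inequalities to the extremal problem for the expected triangle area $EV_2^K$, compute the extremal values directly for the triangle and the disk, and then prove extremality by two different symmetrization arguments.

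Using the identity $p_2^K = 4\,EV_2^K / \lambda_2(K)$ already recorded in the paper (the four events ``$X_i$ lies in the triangle spanned by the other three points'' being almost surely disjoint), together with the affine invariance of the ratio $EV_2^K / \lambda_2(K)$, it suffices to prove
\[
\frac{35}{48\pi^2} \;\le\; \frac{EV_2^K}{\lambda_2(K)} \;\le\; \frac{1}{12}
\]
for every planar convex body $K$, with equality on the left for ellipses and on the right for triangles. For a unit-area triangle $T$, parameterizing the three random points by barycentric coordinates (or invoking Lemma~\ref{lem:integral} with $d=2$) gives $EV_2^T = 1/12$. For the unit-area disk $D$, a classical polar-coordinate computation going back to Woolhouse and Crofton yields $EV_2^D = 35/(48\pi^2)$; by affine invariance the same value holds for every ellipse.

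The main step is extremality, and I would argue the two bounds by different means. For the \emph{lower} bound I would apply Steiner symmetrization: a classical theorem of Blaschke asserts that $EV_2^K$ is non-increasing under Steiner symmetrization in every direction while $\lambda_2(K)$ is preserved. Iterating symmetrizations along a dense sequence of directions produces a Hausdorff-convergent sequence whose limit is a disk of the same area; a semicontinuity argument for $EV_2$ then yields the inequality, and the rigidity of equality in Steiner symmetrization forces $K$ to be an ellipse. For the \emph{upper} bound, Steiner symmetrization moves \emph{away} from triangles rather than toward them, so I would instead use a shadow-system argument of Campi--Gronchi type: embed $K$ in a one-parameter family $K_t$ of convex bodies of equal area obtained by sliding parallel chords, show that $t \mapsto EV_2^{K_t}$ is convex on $[0,1]$, and conclude that the maximum of $EV_2/\lambda_2$ over $K_t$ is attained at an endpoint where some chord has degenerated to a point. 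Iterating such deformations forces the extremal body to collapse onto a triangle, where the value $1/12$ is realized.

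The principal obstacle in both parts is establishing the monotonicity (respectively convexity) of $EV_2$ under the chosen deformation. Both properties reduce to a careful study of how the triangle-area function $\Delta_2(x_1,x_2,x_3)$ transforms when the one-dimensional chords of $K$ are rearranged, combined with a rearrangement inequality of Hardy--Littlewood or one-dimensional Brunn--Minkowski type applied to the distribution of chord heights. Pinning down the equality cases, in particular the rigidity singling out ellipses on one side and triangles on the other, requires the sharper versions of these rearrangement inequalities together with the observation that the only affine invariants of a planar convex body fixed by \emph{all} Steiner symmetrizations are ellipses.
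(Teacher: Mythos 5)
The first thing to note is that the paper does not prove Theorem~\ref{thm:Blaschke} at all: it is quoted verbatim from Blaschke~\cite{Blaschke1917,Blaschke1923} and used as a black box in the proof of Corollary~\ref{cor:improvedLowerBound}, so your proposal cannot be measured against an internal argument and the right benchmark is the classical literature. Against that benchmark your outline is faithful. The reduction $p_2^K = 4\,EV_2^K/\lambda_2(K)$ is exactly the $d=2$ case of the identity~\eqref{eq-improvedLowerBoundEstimate} recorded in the paper; the extremal values $EV_2^T/\lambda_2(T)=1/12$ and $35/(48\pi^2)$ for the disk are the classical Sylvester computations and translate correctly into the stated bounds $1/3$ and $35/(12\pi^2)$; Steiner symmetrization is precisely how Blaschke proved the minimum; and the shadow-system route to the maximum is the modern proof of Campi, Colesanti and Gronchi (Blaschke's original argument for the triangle was a related variational one). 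One simplification you missed: Groemer's Theorem~\ref{thm:Groemer}, which the paper does quote, already contains your entire lower-bound half for $d=2$, \emph{including} the ellipse equality case, so only the triangle maximality genuinely requires fresh machinery.

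The weakness is that, as written, everything substantive is deferred. The monotonicity of $EV_2$ under Steiner symmetrization and the convexity of $t \mapsto EV_2^{K_t}$ along shadow systems \emph{are} the content of the theorem, and ``a rearrangement inequality of Hardy--Littlewood or one-dimensional Brunn--Minkowski type'' is a gesture, not an argument: the Steiner step requires conditioning on the three chords containing the sample points and proving a three-chord inequality for the conditional expected area, whose equality case is that the midpoints of all chords in the symmetrization direction are collinear. Your rigidity sentence is also misstated: the fact needed is that a convex body whose chord midpoints are collinear in \emph{every} direction is an ellipse, not a claim about ``affine invariants fixed by Steiner symmetrizations.'' On the triangle side, endpoint maximality from convexity alone does not deliver the ``only if'': you additionally need a compactness or approximation step reducing the maximization to polygons (the paper's remark that affine-invariant continuous functionals attain their maximum helps here), a vertex-sliding shadow system whose endpoints strictly decrease the vertex count, and strictness of the convexity away from degenerate families to force uniqueness of the triangle. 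In short: correct reduction, correct constants, correct architecture matching the classical proofs, but the two key lemmas and both equality characterizations are asserted rather than proved.
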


\begin{theorem}[{\cite{Groemer1973}}]
\label{thm:Groemer}
For every integer $d\geq 2$ and every $d$-dimensional convex body $K \subseteq \mathbb{R}^d$, we have
\[
EV^B_d \leq EV^K_d,
\]
where $B$ is the $d$-dimensional ball of the same volume as $K$.
Moreover, the inequality is tight if and only if $K$ is an ellipsoid.
\end{theorem}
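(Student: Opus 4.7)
The plan is to prove Groemer's theorem via \emph{Steiner symmetrization}. For any hyperplane $H \subseteq \RR^d$ with unit normal $u$, the Steiner symmetrization $S_H(K)$ is the convex body obtained by replacing, for every line $\ell$ parallel to $u$, the chord $K \cap \ell$ with a chord of the same length whose midpoint lies on $H$. Steiner symmetrization preserves the $d$-dimensional volume and the convexity of $K$, and there is a sequence of hyperplanes $H_1, H_2, \ldots$ such that iterated symmetrizations $K, S_{H_1}(K), S_{H_2}(S_{H_1}(K)), \ldots$ converge in Hausdorff metric to the ball $B$ of the same volume as $K$. Since $K \mapsto \int_{K^{d+1}} \Delta_d \; {\rm d}x_0 \cdots {\rm d}x_d$ is continuous in the Hausdorff metric on convex bodies (by dominated convergence with the uniformly bounded integrand on a bounded neighborhood), once we show that Steiner symmetrization does not increase $EV^K_d$, the theorem follows.

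The key step is to prove $EV^{S_H(K)}_d \leq EV^K_d$. Identify $\RR^d \cong H \times \RR$ and write each point as $(y,t)$. For each $y \in H$, the fiber $K_y := \{t : (y,t) \in K\}$ is an interval $[a(y), b(y)]$ of length $\ell(y)$; after symmetrization the fiber becomes $[-\ell(y)/2, \ell(y)/2]$. Since the $d$-volume of the simplex with vertices $(y_0, t_0), \ldots, (y_d, t_d)$ equals $\frac{1}{d!}|\det M|$ with $M$ the $d \times d$ matrix whose $i$-th row is $(y_i - y_0, t_i - t_0)$, Laplace expansion along the last column gives
\[
\det M \;=\; \sum_{i=0}^{d} c_i(y_0, \ldots, y_d)\, t_i,
\]
a linear function of $(t_0, \ldots, t_d)$ whose coefficients depend only on the $y_j$'s. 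By Fubini, it suffices to show, for every fixed tuple $y_0, \ldots, y_d$, that the inner integral over the $t_i$'s does not increase under symmetrization. Substituting $t_i = s_i + m_i$ with $m_i := (a(y_i)+b(y_i))/2$ and $s_i \in [-\ell(y_i)/2, \ell(y_i)/2]$, and setting $X := \sum_i c_i s_i$, $C := \sum_i c_i m_i$, the claim reduces to
\[
\int \cdots \int |X + C| \, {\rm d}s_0 \cdots {\rm d}s_d \;\geq\; \int \cdots \int |X| \, {\rm d}s_0 \cdots {\rm d}s_d.
\]
Since the domain of $(s_0, \ldots, s_d)$ is symmetric about the origin, $X$ and $-X$ have the same distribution, so
\[
E|X + C| \;=\; \tfrac{1}{2}\bigl(E|X+C| + E|X-C|\bigr) \;\geq\; E|X|
\]
by the convexity of $|\cdot|$ applied pointwise. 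This establishes the inequality $EV^{S_H(K)}_d \leq EV^K_d$, and combining with the convergence argument above yields $EV^B_d \leq EV^K_d$.

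For the equality characterization, note that $EV^K_d$ is invariant under volume-preserving affine transformations (since both the simplex volume and the Lebesgue measure on $K$ transform identically), so every ellipsoid of volume $\lambda_d(K)$ achieves the minimum. Conversely, if $EV^K_d = EV^B_d$, then equality must hold in every Steiner symmetrization step along any sequence of directions converging $K$ to $B$. Equality in the pointwise inequality $\tfrac12(|a+C|+|a-C|) \geq |a|$ forces $C = \sum_i c_i(y_0,\ldots,y_d) m_i$ to vanish for almost all $(y_0, \ldots, y_d)$, which (after tracing the consequences for the chord midpoint function $y \mapsto m(y)$) forces $m$ to be affine, meaning $K$ is already symmetric about a hyperplane parallel to $H$. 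Propagating this symmetry in enough directions forces $K$ to be an ellipsoid.

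The main obstacle is the equality case: while the basic symmetrization inequality is clean, rigorously extracting the ellipsoid characterization from equality in the convexity step (controlling when the midpoint function is affine, and promoting hyperplane-by-hyperplane symmetry of $K$ into a global ellipsoidal structure) is the subtle part of the argument.
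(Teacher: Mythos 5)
The paper itself offers no proof of this statement; it is quoted directly from Groemer's 1973 paper, so the only meaningful comparison is with the original source---and your Steiner-symmetrization strategy is in fact exactly Groemer's route. The inequality half of your argument is sound: the fibrewise reduction via Laplace expansion, the identity $E|X+C| = \tfrac12\bigl(E|X+C| + E|X-C|\bigr)$ from the symmetry of the law of $X$, and the pointwise bound $\tfrac12\bigl(|a+C|+|a-C|\bigr) \geq |a|$ correctly give $EV^{S_H(K)}_d \leq EV^K_d$ (the normalization $\lambda_d(K)^{d+1}$ is unaffected since symmetrization preserves volume), and the limiting step works because Hausdorff convergence of convex bodies of fixed volume implies $\lambda_d(K_j \,\triangle\, B) \to 0$, so the moment functional is continuous along the sequence.

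The genuine gap is the equality characterization, which is part of the stated theorem and which your proposal only sketches. Three specific repairs are needed. First, your claim that equality ``forces $C$ to vanish'' does not follow pointwise: one has $\tfrac12\bigl(|a+C|+|a-C|\bigr) = \max\{|a|,|C|\}$, so equality for a fixed tuple $(y_0,\dots,y_d)$ forces only $|X| \geq |C|$ almost surely; to conclude $C=0$ you must argue that $0$ lies in the interior of the support of $X$ for almost every tuple (the cofactors $c_i$ are nonzero a.e.\ and $\ell(y_i)>0$ on the interior of the projection of $K$). Second, $EV^K_d = EV^B_d$ a priori yields equality only along your chosen sequence of symmetrizations; to obtain the midpoint condition in \emph{every} direction $u$ you need the chain $EV^B_d \leq EV^{S_u(K)}_d \leq EV^K_d = EV^B_d$, which is legitimate once the inequality half is proved for all bodies, but should be said. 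Third, and most substantially, the concluding step---a convex body all of whose chord-midpoint loci (one locus per direction) lie in hyperplanes is an ellipsoid---is a nontrivial classical theorem (due to Brunn, with later treatments by Blaschke) that you neither prove nor cite; note also that affinity of the midpoint function $m$ gives an affine (skew) reflection symmetry of $K$ in direction $u$ about the tilted hyperplane $t = m(y)$, not symmetry ``about a hyperplane parallel to $H$'' as you wrote. With these three points supplied the argument closes; as written, the ``only if'' half remains a sketch, as you yourself concede.
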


\begin{proof}[Proof of Corollary~\ref{cor:improvedLowerBound}]
Let $\mathcal{C}_K$ be the space of all $(d-1)$-dimensional convex bodies that can be obtained as an intersection of a $d$-dimensional convex body $K$ with a hyperplane.
The probability $p_{d-1}^K$ is affine invariant and thus attains its minimum and maximum on~$\mathcal{C}_K$ for every $K$; we refer the reader to~\cite{macbeath51} and Chapter~1 in~\cite{schneider14}. 

Note that $\mathcal{C}_B$ contains only ellipsoids.
Since $\mathcal{C}_S$ does not contain ellipsoids, Theorem~\ref{thm:Groemer} gives 
\[
\min_{K \in \mathcal{C}_S} p^K_{d-1} > p^{B^{d-1}}_{d-1} = \max_{K \in \mathcal{C}_B} p^K_{d-1}.
\]
By estimating $EV^{K \cap E}_{d-1}$ in~\eqref{eq-improvedLowerBound2} from below with $(d+1)\lambda_{d-1}(K \cap E)\min_{K \in \mathcal{C}_S} p^K_{d-1}$ for $S$ and from above with $(d+1)\lambda_{d-1}(K \cap E)p^{B^{d-1}}_{d-1}$ for $B$, it then follows from the proof of Theorem~\ref{thm:improvedLowerBound} that
\[\lim_{n \to \infty} n^{-d}EH^S_{d,d+1}(n) < \lim_{n \to \infty} n^{-d}EH^B_{d,d+1}(n).\]

For the bounds in $\mathbb{R}^3$, the upper bound was proved by Reitzner and Temesvari~\cite{ReitznerTemesvari2019}. Thus, we only need to show the lower bound.
Theorem~\ref{thm:Blaschke} implies $p_2 = \frac{1}{3}$ and by plugging this identity into Theorem~\ref{thm:improvedLowerBound}, we obtain 
\[
\lim_{n \to \infty} n^{-3}EH_{3,4}^K \geq \frac{2}{2! \cdot \frac{1}{3}} = 3.
\]
\end{proof}

Finally, we show an upper bound on $p^K_d$ for every $d \geq 2$ and every convex body $K \subseteq \mathbb{R}^d$ with small diameter. 
That is, we prove Proposition~\ref{prop:improvedLowerBound2}.

\begin{proof}[Proof of Proposition~\ref{prop:improvedLowerBound2}]
We can assume without loss of generality that $f(K)=K$.
We prove the upper bound on $p^K_d$.
Since $\frac{(d+2)EV_d^K}{\lambda_d(K)}= p^{K}_d$ by~\eqref{eq-improvedLowerBoundEstimate}, 
it is sufficient to estimate the term $\frac{EV_d^K}{\lambda_d(K)}$, which is the expected $d$-dimensional volume $EV^K_d$ of the convex hull ${\rm conv}(\{p_1,\dots,p_{d+1}\})$ of $d+1$ points drawn uniformly from $K$ divided by the volume $\lambda_d(K)$ of $K$.
The volume of ${\rm conv}(\{p_1,\dots,\allowbreak p_{d+1}\})$ can be expressed as
$\frac{1}{d!}\prod_{i=2}^{d+1}h_i$, where $h_i$ is the distance of $p_i$ from the affine hull of the set $\{p_1,\dots,p_{i-1}\}$.
Since $K$ is convex, the distance $h_i$ is at most the diameter $D$ of $K$ for every $i \in \{2,\dots,d+1\}$ and thus
\[
\frac{EV^K_d}{\lambda_d(K)} \leq \frac{1}{d!\lambda_d(K)}\prod_{i=2}^{d+1}h_i \leq \frac{D^d}{d!\lambda_d(K)}.
\]
Altogether, we have $p_d \leq \frac{(d+2)D^d}{d!\lambda_d(K)}$.
\end{proof}

\section{Proof of Theorem~\ref{thm:4holes}}
\label{sec:4holes}
 
Let $K$ be a planar convex body of unit area.
Let $S$ be a set of $n$ points chosen uniformly and independently at random from~$K$.
We prove Theorem~\ref{thm:4holes} by showing that the expected number $E^K_{2,4}(n)$ of $4$-holes in $S$ satisfies 
\[\lim_{n \to \infty} n^{-2}E^K_{2,4}(n) = 10 - \frac{2\pi^2}{3}.\]

We distinguish two possible \emph{types} of $4$-holes and estimate their expected numbers separately.
In the first type, the pair of vertices of a 4-hole $H$ in $S$ at the largest distance between any two vertices of $H$ forms a diagonal of $H$.
In the other type, this pair of vertices forms an edge of $H$; see Figure~\ref{fig:4holes_bound_1}.

\begin{figure}[htb]
  \centering
    \includegraphics[page=1]{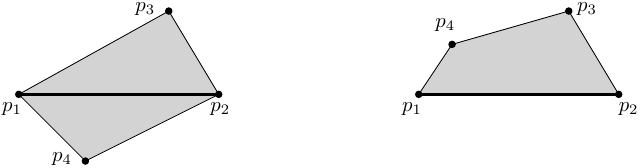}
  \caption{Two types of 4-holes.}
  \label{fig:4holes_bound_1}
\end{figure}

We now estimate the expected number of the $4$-holes of the first type.
For points $p_1,p_2,p_3,p_4 \in S$, we estimate the probability 
of the event $E_1(p_3,p_4 \mid p_1,p_2)$ that there is a 4-hole $H=\{p_1,\dots,p_4\}$ in $S$, where  the points $p_1$ and $p_2$ form a diagonal and determine the largest distance between any two points from $H$, conditioned on the fact that $p_1$ and $p_2$ are already placed in $K$. 
That is, the probability that, for fixed $p_1$ and $p_2$, the points $p_3$ and $p_4$ determine a $4$-hole of the first type with diameter $|p_1p_2|$.

Without loss of generality we can assume that $p_1=(0,0)$ 
and $p_2=(\ell,0)$ for some $\ell>0$,
as otherwise we apply a suitable isometry to~$S$.
Let $R$ be the set of points from $K \cap ([0,\ell] \times [-\frac{2}{\ell},\frac{2}{\ell}])$ 
that are at distance at most $\ell$ from $p_1$ and also from~$p_2$.
Note that the set $R$ is convex.
For every point $p \neq p_1,p_2$ of the $4$-hole $H$, we have $x(p_1)<x(p)<x(p_2)$, as otherwise $|p_1p_2|$ is not the diameter of $H$.
If $|y(p)|>\frac{2}{\ell}$, then the triangle $T$ spanned by $p_1$, $p_2$, and $p$ has area larger than $1$, which is impossible, since we have $T \subseteq K$ and $\lambda_2(K)=1$ by the convexity of~$K$.
Consequently, $p$ lies in $R$. 
For a real number $y \in [-\frac{2}{\ell},\frac{2}{\ell}]$, let $I_y$ be the line segment formed by points $r \in R $ with $y(r)=y$.
Note that $|I_y| \leq \ell$ for every $y$ and that $|I_0| = \ell$.

The event $E_1(p_3,p_4 \mid p_1,p_2)$ is partitioned into two events $E_1^{\myarrow[90]}(p_3,p_4 \mid p_1,p_2)$ and $E^{\myarrow[270]}_1(p_3,p_4 \mid p_1,p_2)$ distinguished by the position of $p_3$ and $p_4$ (either $p_3$ lies above $\overline{p_1p_2}$ and $p_4$ below or vice versa).
By symmetry, it suffices to estimate the probability of $E_1^{\myarrow[90]}(p_3,p_4 \mid p_1,p_2)$.

The event $E^{\myarrow[90]}_1(p_3,p_4 \mid p_1,p_2)$ happens 
if and only if
$p_3,p_4$ lie in $R$, both triangles
$p_1p_2p_3$ and $p_1p_2p_4$ are 3-holes in $S$, and $|p_3p_4|\leq|p_1p_2|$.
Note that $\lambda_2(p_1p_2p_3) + \lambda_2(p_1p_2p_4) = \frac{\ell}{2} (y(p_3)-y(p_4)) \le 1$ by the convexity of~$K$, so we get $-y(p_4) \le 2/\ell-y(p_3)$. 
Therefore, the probability $\Pr[E^{\myarrow[90]}_1(p_3,p_4 \mid p_1,p_2)]$ is at most
\begin{equation}
\label{eq-4HolesUpper}
\int_{y = 0}^{2/\ell} 
\int_{z = 0}^{2/\ell-y} 
|I_{y}| \cdot 
|I_{-z}| \cdot 
\Pr[\text{$p_1p_2p_3p_4$ is empty in }S \mid p_3 \in I_{y},p_4 \in I_{-z}] \;{\rm d}z {\rm d}y.
\end{equation}
Let $I^y_z$ be the set of points from $I_z$ that are at distance at most $\ell$ from any point of $I_y$ and observe that $I^0_0=I_0$.
Then,
the probability $\Pr[E^{\myarrow[90]}_1(p_3,p_4 \mid p_1,p_2)]$ is at least
\begin{equation}
\label{eq-4HolesLower}
\int_{y = 0}^{2/\ell} 
\int_{z = 0}^{2/\ell-y} 
|I_{y}| \cdot 
|I^y_{-z}| \cdot 
\Pr[\text{$p_1p_2p_3p_4$ is empty in }S \mid p_3 \in I_{y},p_4 \in I^y_{-z}] \;{\rm d}z {\rm d}y.
\end{equation}

Note that 
\[
\Pr[\text{$p_1p_2p_3p_4$ is empty in } S \mid p_3 \in I_{y},p_4 \in I_{-z}] = \left(1-\frac{ \ell \cdot (y + z)}{2}\right)^{n-4},
\]
as all the remaining $n-4$ points from $S$ must lie outside of $\conv(\{p_1,p_2,p_3,p_4\})$, which has area $\lambda_2(p_1p_2p_3) + \lambda_2(p_1p_2p_4) = \frac{\ell}{2} (y(p_3)-y(p_4))$.
The same expression holds for $\Pr[\text{$p_1p_2p_3p_4$ is empty in } S \mid p_3 \in I_{y},p_4 \in I^y_{-z}]$.
We now plug in this expression to~\eqref{eq-4HolesUpper} and~\eqref{eq-4HolesLower} and substitute $Y = yn$ and $Z=zn$.
Then,~\eqref{eq-4HolesUpper} becomes
\[
\frac{1}{n^2}\int_{Y = 0}^{2n/\ell} 
\int_{Z = 0}^{2n/\ell-Y} 
|I_{Y/n}| \cdot |I_{-Z/n}| \cdot 
\left(1-\frac{ \ell \cdot (Y + Z)}{2n}\right)^{n-4} \;{\rm d}Z {\rm d}Y
\]
while~\eqref{eq-4HolesLower} becomes
\[
\frac{1}{n^2}\int_{Y = 0}^{2n/\ell} 
\int_{Z = 0}^{2n/\ell-Y} 
|I_{Y/n}| \cdot |I^{Y/n}_{-Z/n}| \cdot 
\left(1-\frac{ \ell \cdot (Y + Z)}{2n}\right)^{n-4} \;{\rm d}Z {\rm d}Y.
\]
Note that we integrate over the set $\{(Y,Z) \in \mathbb{R}^2 \colon Y+Z \leq 2n/\ell, Y,Z \geq 0\}$, which becomes $\{(Y,Z) \in \mathbb{R}^2 \colon Y,Z \geq 0\}$ for $n$ going to infinity.
Also note that all sets $I_{Y/n}$, $I_{-Z/n}$, and $I^{Y/n}_{-Z/n}$ become $I_0$ as $n$ goes to infinity.
Thus, by the monotone convergence theorem, we have
\begin{align*}
    \lim_{n \to \infty} n^2 \Pr[E^{\myarrow[90]}_1(p_3,p_4 \mid p_1,p_2)] = 
    \int_{Y = 0}^{\infty} 
\int_{Z = 0}^{\infty} 
|I_{0}| \cdot |I_{0}| \cdot 
e^{-{ \ell \cdot (Y + Z)}/{2} } \;{\rm d}Z {\rm d}Y.
\end{align*}
Since $|I_{0}| = \ell$, we can simplify this expression as
\begin{align*}
    \lim_{n \to \infty} n^2 \Pr[E^{\myarrow[90]}_1(p_3,p_4 \mid p_1,p_2)] 
    &= 
     \int_{Y = 0}^{\infty}  \int_{Z = 0}^{\infty} \ell^2 e^{-{ \ell \cdot (Y + Z)}/{2} } \;{\rm d}Z {\rm d}Y 
     \\
    &= 
     { \int_{Y = 0}^{\infty} \ell e^{- \ell Y/{2} } \; {\rm d}Y }
     \cdot 
     { \int_{Z = 0}^{\infty} \ell e^{- \ell Z/{2} } \; {\rm d}Z }
      = 2 \cdot 2 = 4.
\end{align*}
Thus, the expected number of $4$-holes of the first type determined by $p_1$ and $p_2$ is $\binom{n}{2} \cdot 2 \cdot \frac{4}{n^2} = 4(1+o(1))$ for $n$ going to infinity.
It follows from the dominated convergence theorem that the overall expected number of $4$-holes of the first type is $4(1+o(1))\binom{n}{2} = 2(1+o(1))n^2$, 
independently on the location of the points $p_1$ and~$p_2$.

It remains to estimate the expected number of the 4-holes of the second type.
For points $p_1,p_2,p_3,p_4 \in S$, we estimate the probability of the event $E_2(p_3,p_4 \mid p_1,p_2)$ that, for fixed $p_1$ and $p_2$, there is a 4-hole $H=\{p_1,p_2,p_3,p_4\}$ in $S$, where  $p_1$ and $p_2$ form an edge of $\conv(H)$ and determine the largest distance between any two points from $H$ and $p_4$ is closer to the line $\overline{p_1p_2}$ than $p_3$. 
In particular, $p_1$ and $p_2$ determine a $4$-hole of the second type of diameter $|p_1p_2|$.

Again, we can assume without loss of generality that $p_1=(0,0)$ 
and $p_2=(\ell,0)$ for some $\ell>0$.
We consider the sets $R$ and $I_y$ for every $y\in [-\frac{2}{\ell},\frac{2}{\ell}]$ defined exactly as before.
Again, every point $p \neq p_1,p_2$ of the $4$-hole $H$ lies in $R$. 

The event $E_2(p_3,p_4 \mid p_1,p_2)$ is partitioned into four pairwise disjoint events $E_2^{\myarrow[135]}(p_3,p_4 \mid p_1,p_2)$, $E_2^{\myarrow[45]}(p_3,p_4 \mid p_1,p_2)$, $E_2^{\myarrow[-45]}(p_3,p_4 \mid p_1,p_2)$, and $E_2^{\myarrow[-135]}(p_3,p_4 \mid p_1,p_2)$ distinguished by the position of $p_3$ and $p_4$ with respect to the line $\overline{p_1p_2}$ (either both lie above the line or below) and by the position of $p_4$ with respect to $p_3$ ($p_4$ is either to the left or to the right of $p_3$).
First, we estimate the probability $\Pr[E_2^{\myarrow[135]}(p_3,p_4 \mid p_1,p_2)]$ of the event $E_2(p_3,p_4 \mid p_1,p_2)$ conditioned on the fact that $p_3$ and $p_4$ both lie above $\overline{p_1p_2}$ and $p_4$ lies to the left of~$p_3$; see Figure~\ref{fig:4holes_bound_3}.

\begin{figure}[htb]
  \centering
    \includegraphics[page=3]{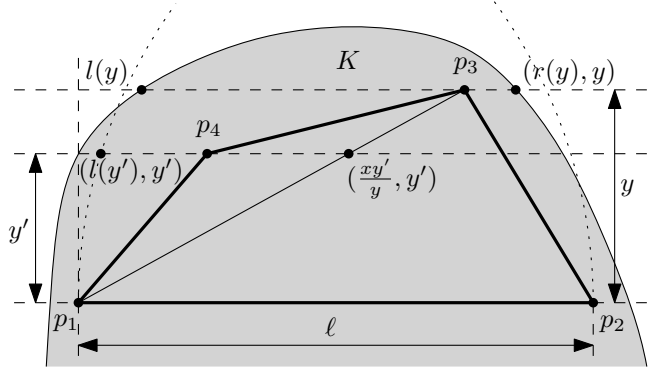}
  \caption{The case when the points $p_3=(x,y)$ and $p_4=(x',y')$ are both above $\overline{p_1p_2}$ and $p_4$ lies to the left of $p_3$.}
  \label{fig:4holes_bound_3}
\end{figure}

The event $E_2^{\myarrow[135]}(p_3,p_4 \mid p_1,p_2)$ happens if and only if $p_3,p_4$ lie in $R$ and both triangles $p_1p_2p_3$ and $p_1p_3p_4$ are 3-holes in $S$.
For $y \in [0,2/\ell]$, let $(l(y),y)$ and $(r(y),y)$ be the left and the right endpoint of the line segment $I_y$, respectively.
Note that $l(0) = 0$ and $r(0) = \ell$.
The point $p_3$ lies in $I_y$ for some $y \in [0,2/\ell]$ and the $x$-coordinate of $p_3$ then lies in the interval $[l(y),r(y)]$.
Since $p_4$ lies to the left of~$\overline{p_1p_3}$, the $x$-coordinate of $p_4$ lies in the interval $[l(y'),xy'/y']$, where $p_3=(x,y)$ and $y'$ is the $y$-coordinate of $p_4$.

Thus, we can express the probability $\Pr[E_2^{\myarrow[135]}(p_3,p_4 \mid p_1,p_2)]$ as
\[
\int_0^{2/\ell} 
\int_{l(y)}^{r(y)}
\int_0^y
\int_{l(y')}^{xy'/y}
P(p_1,p_2,p_3,p_4)\;
{\rm d}x' \,
{\rm d}y' \, 
{\rm d}x \,
{\rm d}y,
\]
where $P(p_1,p_2,p_3,p_4)$ is the probability that ${\rm conv}(\{p_1,p_2,p_3,p_4\})$ is empty in $S$, conditioned on the event $p_3=(x,y)$ and $p_4=(x',y')$.

\begin{figure}[htb]
  \centering
    \includegraphics[page=4]{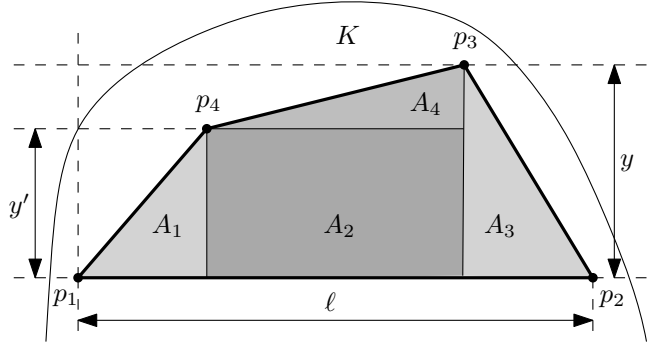}
  \caption{The partitioning of ${\rm conv}(\{p_1,p_2,p_3,p_4\})$.}
  \label{fig:4holes_bound_4}
\end{figure}

By partitioning the set ${\rm conv}(H) = {\rm conv}(\{p_1,p_2,p_3,p_4\})$ into four regions $A_1$, $A_2$, $A_3$, and $A_4$ as in Figure~\ref{fig:4holes_bound_4}, we can express the area $\lambda_2({\rm conv}(H))$ as $\lambda_2(A_1) + \cdots + \lambda_2(A_4)$, which equals
\[\frac{x'y'}{2} + (x-x')y'  + \frac{(\ell-x)y}{2} + \frac{(x-x')(y-y')}{2}.\]
Rewriting this expression, we obtain $\lambda_2(\conv(H)) = \frac{(\ell - x')y+xy'}{2}$.
Thus, we have
\[
P(p_1,p_2,p_3,p_4) = (1-\lambda_2(\conv(H)))^{n-4} 
= \left(1-\frac{(\ell - x')y+xy'}{2}\right)^{n-4},
\]
as $\lambda_2(K) = 1$ and all the remaining $n-4$ points from $S$ must lie in $K \setminus \conv(H)$.

Therefore, the probability $\Pr[E_2^{\myarrow[135]}(p_3,p_4 \mid p_1,p_2)]$ equals
\[\int_0^{2/\ell} 
\int_{l(y)}^{r(y)}
\int_0^y
\int_{l(y')}^{xy'/y}
\left(1-\frac{(\ell - x')y+xy'}{2}\right)^{n-4}\;
{\rm d}x' \,
{\rm d}y' \, 
{\rm d}x \,
{\rm d}y.\]
By Fubini's theorem, we can exchange the order of the integrals as
\[\int_0^{2/\ell} 
\int_0^y
\int_{l(y)}^{r(y)}
\int_{l(y')}^{xy'/y}
\left(1-\frac{(\ell - x')y+xy'}{2}\right)^{n-4}\;
{\rm d}x' \,
{\rm d}x \,
{\rm d}y' \, 
{\rm d}y.\]
We now substitute $Y = yn$ and $Y'=y'n$ and express $\Pr[E_2^{\myarrow[135]}(p_3,p_4 \mid p_1,p_2)]$ as
\[\frac{1}{n^2}
\int_0^{\frac{2n}{\ell}} 
\int_0^Y
\int_{l\left(\frac{Y}{n}\right)}^{r\left(\frac{Y}{n}\right)}
\int_{l\left(\frac{Y'}{n}\right)}^{\frac{xY'}{Y}}
\left(1-\frac{(\ell - x') Y+xY'}{2n}\right)^{n-4}
{\rm d}x' \,
{\rm d}x \,
{\rm d}Y' \, 
{\rm d}Y.\]
Since $l(0) = 0$ and $r(0)=\ell$, the monotone convergence theorem implies that \[\lim_{n \to \infty} n^2\Pr[E_2^{\myarrow[135]}(p_3,p_4 \mid p_1,p_2)]\]
equals
\[\int_0^{\infty} 
\int_0^Y
\int_0^\ell
\int_0^{xY'/Y}
e^{-((\ell - x')Y+xY')/2}\;
{\rm d}x' \,
{\rm d}x \,
{\rm d}Y' \, 
{\rm d}Y,\]
which can be rewritten as
\[\int_0^{\infty} 
e^{-\ell Y/2}
\int_0^Y 
\int_0^\ell
e^{-xY'/2}
\int_0^{xY'/Y}
e^{x'Y/2}\;
{\rm d}x' \,
{\rm d}x \,
{\rm d}Y' \, 
{\rm d}Y.\]
Computing the innermost integral, we obtain
\[\lim_{n \to \infty} n^2\Pr[E_2^{\myarrow[135]}(p_3,p_4 \mid p_1,p_2)] = 
2\int_0^{\infty} 
\frac{e^{-\ell Y/2}}{Y}
\int_0^Y 
\int_0^\ell
\left(1-e^{-xY'/2}\right)\;
{\rm d}x \,
{\rm d}Y' \, 
{\rm d}Y,\]
which can be further rewritten as
\[
2\int_0^{\infty} 
\frac{e^{-\ell Y/2}}{Y}
\int_0^Y 
\left({\ell}-\frac{1-e^{-\ell Y'/2}}{Y'/2}\right)\;
{\rm d}Y' \, 
{\rm d}Y
=
4\int_0^{\infty} 
\frac{e^{-z}}{z}
\int_0^z 
\left(1-\frac{1-e^{-t}}{t}\right)\;
{\rm d}t \, 
{\rm d}z
\]
by substituting $\frac{\ell}{2}Y=z$ and $\frac{\ell}{2}Y'=t$.
We split the integral into two parts. 
The first part equals
\[
4\int_0^{\infty} 
\frac{e^{-z}}{z}
\int_0^z 
1\;
{\rm d}t \, 
{\rm d}z
=
4\int_0^{\infty} 
e^{-z}
\;
{\rm d}z = 4.
\]

Using the
modified exponential integral function
(see Chapter~5.1 in \cite{abramowitz1965handbook})
\[
\int_0^z 
\frac{1-e^{-t}}{t}\;
{\rm d}t
=
\sum_{k=1}^\infty
\frac{(-1)^{k+1} \cdot z^k}
{k \cdot k!},
\]
we can write the second part as
\[
4\int_0^{\infty} 
\frac{e^{-z}}{z}
\int_0^z 
\frac{1-e^{-t}}{t}\;
{\rm d}t \, 
{\rm d}z
=
4\int_0^{\infty} 
\sum_{k=1}^\infty
\frac{(-1)^{k+1}}{k \cdot k!}
\cdot
e^{-z} z^{k-1}
\;
{\rm d}z
.
\]
The integral of the absolute value is finite, because 
\begin{align*}
4\int_0^{\infty} 
\sum_{k=1}^\infty
\frac{1}{k \cdot k!}
\cdot
e^{-z} z^{k-1}
\;
{\rm d}z
&=
4\sum_{k=1}^\infty
\int_0^{\infty} 
\frac{1}{k \cdot k!}
\cdot 
 e^{-z}  z^{k-1}
\;
{\rm d}z \\
&=
4\sum_{k=1}^\infty
\frac{1}{k \cdot k!}
\int_0^{\infty} 
 e^{-z}  z^{k-1}
\;
{\rm d}z
=
4\sum_{k=1}^\infty
\frac{1}{k^2}
=
\frac{2\pi^2}{3},
\end{align*}
where the first equality follows from Tonelli's theorem,
the second equality from the linearity of the integral,
and the third equality uses $\int_0^{\infty}  e^{-z}  z^{k-1} {\rm d}z = \Gamma(k) = (k-1)!$ for $k \in \mathbb{N}$.
Therefore, we can apply Fubini's theorem
and obtain
\[
4\int_0^{\infty} 
\sum_{k=1}^\infty
\frac{(-1)^{k+1}}{k \cdot k!}
\cdot
e^{-z} z^{k-1}
\;
{\rm d}z
=
4
\sum_{k=1}^\infty
\int_0^{\infty} 
\frac{(-1)^{k+1}}{k \cdot k!}
 e^{-z}  z^{k-1}
\;
{\rm d}z.
\]
Again, since $\int_0^{\infty}  e^{-z}  z^{k-1} {\rm d}z = \Gamma(k) = (k-1)!$ for $k \in \mathbb{N}$,
we have
\[
4
\sum_{k=1}^\infty
\frac{(-1)^{k+1}}{k \cdot k!}
\int_0^{\infty} 
 e^{-z}  z^{k-1}
\;
{\rm d}z
=
4
\sum_{k=1}^\infty
\frac{(-1)^{k+1}}{k^2}
=
\frac{\pi^2}{3}.
\]
Altogether, we have
\[\lim_{n \to \infty} n^2\Pr[E_2^{\myarrow[135]}(p_3,p_4 \mid p_1,p_2)] = 4 - \frac{\pi^2}{3}.\]

The probability of each of the remaining three pairwise disjoint events 
$E_2^{\myarrow[45]}(p_3,p_4 \mid p_1,p_2)$, 
$E_2^{\myarrow[-45]}(p_3,p_4 \mid p_1,p_2)$, and
$E_2^{\myarrow[-135]}(p_3,p_4 \mid p_1,p_2)$ 
can be computed in an analogous way and all of them are equal.
We then obtain 
\begin{align*}
\Pr[E_2(p_3,p_4 \mid p_1,p_2)] &= 
\Pr[E_2^{\myarrow[135]}(p_3,p_4 \mid p_1,p_2)] + \Pr[E_2^{\myarrow[45]}(p_3,p_4 \mid p_1,p_2)] \\
&+ \Pr[E_2^{\myarrow[-45]}(p_3,p_4 \mid p_1,p_2)]+ \Pr[E_2^{\myarrow[-135]}(p_3,p_4 \mid p_1,p_2)] 
\\
&= 4 \Pr[E_2^{\myarrow[135]}(p_3,p_4 \mid p_1,p_2)].
\end{align*}

Thus, the expected number of $4$-holes of the second type determined by $p_1$ and $p_2$ is $n^2 \left(\frac{16}{n^2} -  \frac{4\pi^2}{3n^2}\right) = 16 - \frac{4\pi^2}{3}$ for $n$ going to infinity.
It follows from the dominated convergence theorem that the overall expected number of $4$-holes of the second type is $(1+o(1))\left(16-\frac{4\pi^2}{3}\right)\binom{n}{2} = (1+o(1))\left(8- \frac{2\pi^2}{3}\right)n^2$.
Adding the expected number of the $4$-holes of the first type, we see that 
\[\lim_{n\to \infty}n^{-2}EH_{2,4}^K(n) = 2 + \left(8 - \frac{2\pi^2}{3}\right) = 10 - \frac{2\pi^2}{3},\]
which completes the proof of Theorem~\ref{thm:4holes}.

\section{Proof of Theorem~\ref{thm:independence}}
\label{sec:independece}

We extend the methods used in Section~\ref{sec:4holes} to larger holes in random planar point sets and we show that, for every fixed integer $k \geq 3$, the limit $\lim_{n \to \infty}n^{-2}EH_{2,k}^K(n)$ exists and does not depend on the convex body $K \in \mathcal{K}_2$.

Let $K$ be a convex body of unit area in the plane and let $k \geq 3$ be an integer.
We use $S$ to denote a set of $n$ points chosen uniformly and independently at random from~$K$.

We distinguish several types of $k$-holes in $S$ and we compute the expected number for each type separately.
Let $H$ be a $k$-hole in $S$ and let $p_1,\dots,p_k$ be the points of $H$ such that $|p_1p_k|$ is the diameter of ${\rm conv}(H)$.
We can assume without loss of generality that $p_1=(0,0)$ and $p_k=(\ell,0)$ for some $\ell>0$.
In particular, all the remaining points $p_2,p_3,\dots,p_{k-1}$ lie between $p_1$ and $p_k$, since $|p_1p_k|$ is the diameter of $H$.
We then label the points $p_1,\dots,p_k$ so that they are ordered by their increasing $x$-coordinates.
For $T \subseteq \{2,3,\dots,k-1\}$, we use $\overline{T}$ to denote the set $\{2,3,\dots,k-1\}\setminus T$.
For $a \in \overline{T}$ and $b \in T$, we say that $H$ is of \emph{type} $(T,a,b)$ if all points $p_i$ with $i \in T$ lie above the line $\overline{p_1p_k}$, all points from $\overline{T}$ lie below the line $\overline{p_1p_k}$,  
the point $p_b$ has the largest $y$-coordinate among the points in $T$ and
$p_a$ has the smallest $y$-coordinate among the points in $\overline{T}$; see Figure~\ref{fig:kHolesbound}.
If $T$ or $\overline{T}$ is empty, then we let $a$ or $b$ be equal to $\emptyset$, respectively.
Observe that there are exactly 
\begin{align*}
2(k-2) + \sum_{\substack{T \subseteq \{2,3,,\dots,k-1\}\\ T,\overline{T} \neq \emptyset}} |T|\cdot |\overline{T}| &= 2(k-2) + \sum_{i = 1}^{k-3} \binom{k-2}{i}i(k-2-i) \\
&= 2k-4 + 2^{k-4}(k^2-5k+6)
\end{align*}
types of $k$-holes in $S$.

\begin{figure}[htb]
  \centering
    \includegraphics{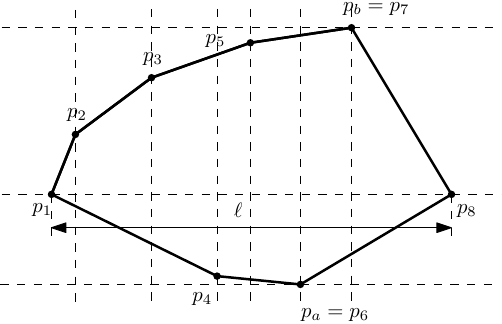}
  \caption{An example of an $8$-hole $\{p_1,\dots,p_8\}$ in $S$ of the type $(\{2,3,5,7\},6,7)$ with the diameter $\ell = |p_1p_8|$.}
  \label{fig:kHolesbound}
\end{figure}

Let $H$ be a $k$-hole of a type $(T,a,b)$ and let $T^{\myarrow[135]}$ be the set of points $p_i$ with $i \in T$ and $i < b$.
Then, since the points $p_1,\dots,p_k$ are in convex position, the points $p_i$ with $i \in T^{\myarrow[135]} \cup \{1,b\}$ have increasing $y$-coordinates.
Similarly, we define $T^{\myarrow[45]}$ to be the set of points $p_i$ with $i \in T$ and $i > b$ and $T^{\myarrow[-135]}$ and $T^{\myarrow[-45]}$ to be the sets of points $p_i$ with $i \in \overline{T}$ and with $i < a$ and $i>a$, respectively.
The convex position of the points $p_1,\dots,p_k$ then implies that the points $p_i$ with $i \in T^{\myarrow[45]} \cup \{b,k\}$ and $T^{\myarrow[-135]} \cup \{1,a\}$ have decreasing $y$-coordinates while the points $p_i$ with $T^{\myarrow[-45]} \cup \{a,k\}$ have increasing $y$-coordinates.

Let $(T,a,b)$ be a fixed type of $k$-holes in $S$.
For any two points $p_1,p_k \in K$, we estimate the probability of the event $E_{T,a,b}(p_1,\dots,p_k \mid p_1,p_k)$ that, for fixed $p_1$ and $p_k$, there is a $k$-hole $H=\{p_1,\dots,p_k\}$ in $S$ of the type $(T,a,b)$ with the diameter $|p_1p_k|$. 

We use $R$ to denote the set of points from $K \cap ([0,\ell] \times [-\frac{2}{\ell},\frac{2}{\ell}])$ that are at distance at most $\ell$ from $p_1$ and also from $p_k$.
Similarly as in Section~\ref{sec:4holes}, note that $R$ is convex and that every point $p\neq p_1,p_k$ of the $k$-hole $H$ lies in $R$.
For $y \in [-\frac{2}{\ell},\frac{2}{\ell}]$, let $I_y$ be the line segment formed by points $r \in R$ with $y(r) = y$.
We use $(l(y),y)$ and $(r(y),y)$ to denote the left and the right endpoint of $I_y$, respectively.
Note that $l(0) = 0$ and $r(0) = \ell$.

The point $p_a$ lies in $I_y$ for some $y \in [-\frac{2}{\ell},0]$ and $p_b$ in $I_{y'}$ for some $y' \in [0,\frac{2}{\ell}]$.
The $x$-coordinates of $p_a$ and $p_b$ then lie in the intervals $[l(y),r(y)]$ and $[l(y'),r(y')]$, respectively.

Let $p_{i_1},\dots,p_{i_m}$ be the points from $T^{\myarrow[135]} \cup \{p_1,p_b\}$ ordered by increasing $x$-coordinates.
We assume $p_{i_j}=(x_{i_j},y_{i_j})$ for every $j = 1,\dots,m$.
Since the points from $T^{\myarrow[135]} \cup \{p_1,p_b\}$ are in convex position with increasing $x$- and $y$-coordinates, we have, if $m \geq 3$,
\begin{equation}
\label{eq-coordinates1}
0\leq y_{i_{m-1}} \leq y_{i_m} \;\;\;\text{ and }\;\;\; l(y_{i_{m-1}}) \leq x_{i_{m-1}} \leq x_{i_m}\frac{y_{i_{m-1}}}{y_{i_m}}
\end{equation}
as $p_{i_{m-1}} \in I_{y_{m-1}}$ has to lie above the line $\overline{p_1p_{i_{m}}}$.
Moreover, for every $j = m-2,\dots,3,2$, we have 
\begin{equation}
\label{eq-coordinates2}
0 \leq y_{i_j} \leq y_{i_{j+1}}
\end{equation}
and
\begin{equation}
\label{eq-coordinates3}
\max \left\{l(y_{i_j}),x_{i_{j+1}} + \frac{(x_{i_{j+2}}-x_{i_{j+1}})(y_{i_j}-y_{i_{j+1}})}{y_{i_{j+2}}-y_{i_{j+1}}}\right\} \leq x_{i_j} \leq x_{i_{j+1}}\frac{y_{i_j}}{y_{i_{j+1}}}
\end{equation}
as $p_{i_j} \in I_{y_j}$ has to lie below the line $\overline{p_{i_{j+1}}p_{i_{j+2}}}$ and above the line $\overline{p_1p_{i_{j+1}}}$.
Let $J^{\myarrow[135]}_{T,a,b}$ be the set of vectors $(x_2,y_2,x_3,y_3,\dots,x_{k-1},y_{k-1})$ such that 
$0 \leq x_2 \leq \dots \leq x_{k-1} \leq \ell$ and the points $(x_{i_1},y_{i_1}),\dots,(x_{i_{m-1}},y_{i_{m-1}})$  
satisfy the conditions~\eqref{eq-coordinates1},~\eqref{eq-coordinates2} and~\eqref{eq-coordinates3}.
We analogously define the sets $J^{\myarrow[45]}_{T,a,b}$, $J^{\myarrow[-45]}_{T,a,b}$, and $J^{\myarrow[-135]}_{T,a,b}$ for the sets $T^{\myarrow[45]}$, $T^{\myarrow[-45]}$, and $T^{\myarrow[-135]}$, respectively.

We let $A^{\myarrow[135]}_{T,a,b}$ be the area of the convex hull of the points from $T^{\myarrow[135]} \cup \{p_1,p_b,(x(p_b),0)\}$; see Figure~\ref{fig:kHolesboundArea}.
By considering the partitioning into triangles and rectangles as in Figure~\ref{fig:kHolesboundArea}, the area $A^{\myarrow[135]}_{T,a,b}$ can be expressed as
\begin{align}
\begin{split}
\label{eq-area}
\sum_{j=2}^m \frac{(x_{i_j}  - x_{i_{j-1}})(y_{i_j} - y_{i_{j-1}})}{2} 
+ \sum_{j=2}^{m-1} (x_{i_m} - x_{i_j})(y_{i_j} - y_{i_{j-1}}).
\end{split}
\end{align}
We also let $A^{\myarrow[45]}_{T,a,b}$, $A^{\myarrow[-45]}_{T,a,b}$, and $A^{\myarrow[-135]}_{T,a,b}$ be the corresponding areas and we note that they can be evaluated using an analogous approach.
Observe that $\lambda_2({\rm conv}(H)) = A^{\myarrow[135]}_{T,a,b} + A^{\myarrow[45]}_{T,a,b} + A^{\myarrow[-45]}_{T,a,b} + A^{\myarrow[-135]}_{T,a,b}$.

\begin{figure}[htb]
  \centering
    \includegraphics{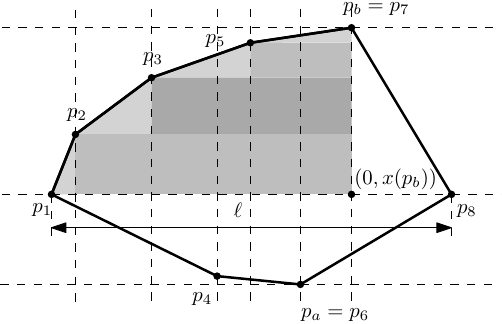}
  \caption{The area $A^{\myarrow[135]}_{T,a,b}$ of the convex hull of the points from $\{p_2,p_3,p_5\}\cup\{p_1,p_b,(x(p_b),0)\}$ is denoted by four grey triangles and three grey rectangles.}
  \label{fig:kHolesboundArea}
\end{figure}

The event $E_{T,a,b}(p_1,\dots,p_k \mid p_1,p_k)$ happens if all points from $S \setminus \{p_1,\dots,p_k\}$ lie in $K \setminus {\rm conv}(H)$.
Since $\lambda_2(K)=1$, we can express the probability $\Pr[E_{T,a,b}(p_1,\dots,p_k \mid p_1,p_k)]$ in the case $a<b$ as
\[
\int_0^{2/\ell}
\int_{l(y_b)}^{r(y_b)}
\int_{-2/\ell+y_b}^0
\int_{l(y_a)}^{\min\{x_b,r(y_a)\}}
\int_{J_{T,a,b}}
(1-\lambda_2({\rm conv}(H))^{n-k}\;
{\rm d}{\bf z} \,
{\rm d}x_a \,
{\rm d}y_a \,
{\rm d}x_b \, 
{\rm d}y_b
,\]
where $p_i=(x_i,y_i)$ for every $i=1,\dots,k$ and $J_{T,a,b}$ is the set of vectors $(x_2,y_2,\dots,x_{k-1},y_{k-1})$ from 
$J^{\myarrow[135]}_{T,a,b} \cap J^{\myarrow[45]}_{T,a,b} \cap J^{\myarrow[-45]}_{T,a,b} \cap J^{\myarrow[-135]}_{T,a,b}$ where any two points $(x_i,y_i)$ and $(x_j,y_j)$ are at distance at most $\ell$. 
In the case $a > b$ we can write $\Pr[E_{T,a,b}(p_1,\dots,p_k \mid p_1,p_k)]$ as
\[
\int_0^{2/\ell}
\int_{l(y_b)}^{r(y_b)}
\int_{-2/\ell+y_b}^0
\int_{\max\{x_b,l(y_a)\}}^{r(y_a)}
\int_{J_{T,a,b}}
(1-\lambda_2({\rm conv}(H))^{n-k}\;
{\rm d}{\bf z} \,
{\rm d}x_a \,
{\rm d}y_a \,
{\rm d}x_b \, 
{\rm d}y_b
.\]
In the case $a=\emptyset$, $\Pr[E_{T,a,b}(p_1,\dots,p_k \mid p_1,p_k)]$ equals
\[
\int_0^{2/\ell}
\int_{l(y_b)}^{r(y_b)}
\int_{J_{T,\emptyset,b}}
(1-\lambda_2({\rm conv}(H))^{n-k}\;
{\rm d}{\bf z} \,
{\rm d}x_b \, 
{\rm d}y_b
\]
and, similarly, if $b=\emptyset$, we have
\[
\int_{-2/\ell}^0
\int_{l(y_a)}^{r(y_a)}
\int_{J_{T,a,\emptyset}}
(1-\lambda_2({\rm conv}(H))^{n-k}\;
{\rm d}{\bf z} \,
{\rm d}x_a \,
{\rm d}y_a
.\]

If we substitute $Y_i = ny_i$ for every $i = 2,3,\dots,k-1$ and we let $n$ go to infinity, then conditions~\eqref{eq-coordinates1},~\eqref{eq-coordinates2}, and~\eqref{eq-coordinates3} become 
\[
0\leq Y_{i_{m-1}} \leq Y_{i_m} \;\;\;\text{ and }\;\;\; 0 \leq x_{i_{m-1}} \leq x_{i_m}\frac{Y_{i_{m-1}}}{Y_{i_m}},
\]
\[
0 \leq Y_{i_j} \leq Y_{i_{j+1}},
\]
and
\[
\max\left\{0,x_{i_{j+1}} + \frac{(x_{i_{j+2}}-x_{i_{j+1}})(Y_{i_j}-Y_{i_{j+1}})}{Y_{i_{j+2}}-Y_{i_{j+1}}}\right\} \leq x_{i_j} \leq x_{i_{j+1}}\frac{Y_{i_j}}{Y_{i_{j+1}}},
\]
respectively, since $l(0)=0$.
In particular, none of the new conditions depends on $K$.
Let $G^{\myarrow[135]}_{T,a,b}$ be the set of vectors $(x_2,Y_2,x_3,Y_3,\dots,x_{k-1},Y_{k-1})$, where $0\leq x_2 \leq \cdots \leq x_{k-1} \leq \ell$ and the points $(x_{i_1},Y_{i_1}),\dots,(x_{i_{m-1}},Y_{i_{m-1}})$ satisfy these new conditions.
We analogously define the sets $G^{\myarrow[45]}_{T,a,b}$, $G^{\myarrow[-45]}_{T,a,b}$, and $G^{\myarrow[-135]}_{T,a,b}$.

By substituting $Y_i = ny_i$ for every $i = 2,3,\dots,k-1$ and letting $n$ go to infinity, the expression~\eqref{eq-area} for the area $A^{\myarrow[135]}_{T,a,b}$ becomes
\[\sum_{j=2}^m \frac{(x_{i_j} - x_{i_{j-1}})(Y_{i_j} - Y_{i_{j-1}})}{2n} + \sum_{j=2}^{m-1} \frac{(x_{i_m}-x_{i_j})(Y_{i_j} - Y_{i_{j-1}})}{n} = \frac{B^{\myarrow[45]}_{T,a,b}}{n},
\]
where $B^{\myarrow[45]}_{T,a,b}$ is independent of $K$.
Similarly, the expressions for the areas $A^{\myarrow[45]}_{T,a,b}$, $A^{\myarrow[-45]}_{T,a,b}$, $A^{\myarrow[-135]}_{T,a,b}$ become $\frac{B^{\myarrow[45]}_{T,a,b}}{n}$, $\frac{B^{\myarrow[-45]}_{T,a,b}}{n}$, $\frac{B^{\myarrow[-135]}_{T,a,b}}{n}$, respectively, where $B^{\myarrow[45]}_{T,a,b}$, $B^{\myarrow[-45]}_{T,a,b}$, $B^{\myarrow[-135]}_{T,a,b}$ are independent of $K$.
Since $\lambda_2({\rm conv}(H)) = A^{\myarrow[135]}_{T,a,b} + A^{\myarrow[45]}_{T,a,b} + A^{\myarrow[-45]}_{T,a,b} + A^{\myarrow[-135]}_{T,a,b}$, we have 
$\lambda_2({\rm conv}(H)) = (B^{\myarrow[135]}_{T,a,b} + B^{\myarrow[45]}_{T,a,b} + B^{\myarrow[-45]}_{T,a,b} + B^{\myarrow[-135]}_{T,a,b})/n$.

Thus by substituting $Y_i = ny_i$ for every $i = 2,3,\dots,k-1$ and using the monotone convergence theorem with the facts $l(0)=0$ and $r(0)=\ell$, 
we can express $\lim_{n \to \infty} n^{k-2}\Pr[E_{T,a,b}(p_1,\dots,p_k \mid p_1,p_k)]$ in the case $a<b$ as
\[
\int_0^\infty
\int_0^\ell
\int_{-\infty}^0
\int_0^{x_b}
\int_{G_{T,a,b}}
e^{-B^{\myarrow[135]}_{T,a,b} - B^{\myarrow[45]}_{T,a,b} - B^{\myarrow[-45]}_{T,a,b} - B^{\myarrow[-135]}_{T,a,b}}\;
{\rm d}{\bf Z} \,
{\rm d}x_a \,
{\rm d}Y_a \,
{\rm d}x_b \, 
{\rm d}Y_b
,\]
where $p_i=(x_i,Y_i/n)$ for every $i=1,\dots,k$ and $G_{T,a,b}$ is the set of vectors $(x_2,Y_2,\dots,\allowbreak x_{k-1},Y_{k-1})$ from $G^{\myarrow[135]}_{T,a,b} \cap G^{\myarrow[45]}_{T,a,b} \cap G^{\myarrow[-45]}_{T,a,b} \cap G^{\myarrow[-135]}_{T,a,b}$ such that the distance between any two points $p_i$ and $p_j$ is at most $\ell$.
Also note that this expression does not depend on $K$ and is finite~\cite{bsv2021}.
This is true also in the remaining three cases $a>b$, $a=\emptyset$, and $b=\emptyset$.

Let $E(p_1,\dots,p_k|p_1,p_k)$ be the event that, for fixed $p_1$ and $p_k$, there is a $k$-hole $\{p_1,\dots,p_k\}$ in $S$ with the diameter  $|p_1p_k|$.
Then the limit $\lim_{n \to \infty}n^{k-2}\Pr[E(p_1,\dots,p_k \mid p_1,p_k)]$ equals
\begin{align*}\sum_{a \in \{2,\dots,k-1\}} \Pr[E_{\emptyset,a,\emptyset}&(p_1,\dots,p_k \mid p_1,p_k)] + \sum_{b \in \{2,\dots,k-1\}}\Pr[E_{\{2,\dots,k-1\},\emptyset,b}(p_1,\dots,p_k \mid p_1,p_k)]\\ &+\sum_{\substack{T \subseteq \{2,\dots,k-1\}\\ T,\overline{T} \neq \emptyset}}\sum_{a \in \overline{T}}\sum_{b \in T} \Pr[E_{T,a,b}(p_1,\dots,p_k \mid p_1,p_k)],
\end{align*}
which also does not depend on the convex body~$K$.

The expected number of $k$-holes in $S$ determined by $p_1$ and $p_k$ with the diameter $|p_1p_k|$ converges to $n^{k-2}\frac{\Pr[E(p_1,\dots,p_k \mid p_1,p_k)]}{n^{k-2}} = \Pr[E(p_1,\dots,p_k \mid p_1,p_k)]$.
Considering all possible pairs $\{p_1,p_k\}$, 
it follows from the dominated convergence theorem that $\lim_{n \to \infty}n^{-2}EH_{2,k}^K(n) =  \frac{1}{2}
\lim_{n \to \infty}n^{k-2}
\Pr[E(p_1,\dots,p_k \mid p_1,p_k)]$, which does not depend on~$K$, $p_1$ and~$p_k$.
This finishes the proof of Theorem~\ref{thm:independence}.

\paragraph{Acknowledgements}
We thank Sophia and Jonathan Rau for helping with the computation in the proof of Theorem~\ref{thm:4holes}.
We would also like to thank to the anonymous reviewers for several helpful comments and for raising the question about the existence of the limits $\lim_{n \to \infty}EH^K_{d,k}(n)$ for general values of $d$ and $k$ and pointing out the connections to the Hyperplane conjecture.

\bibliography{references}
\bibliographystyle{alphaabbrv-url}

\end{document}